\newtheorem{theorem}{Theorem}[section]
\newtheorem{lemma}[theorem]{Lemma}
\newtheorem{corollary}[theorem]{Corollary}
\theoremstyle{definition}
\newtheorem{definition}[theorem]{Definition}
\newtheorem{remark}[theorem]{Remark}
\numberwithin{equation}{section}
\renewcommand{\labelenumi}{\roman{enumi})}
\renewcommand\theenumi\labelenumi
\renewcommand{\leq}{\leqslant}
\renewcommand{\le}{\leqslant}
\renewcommand{\geq}{\geqslant}
\renewcommand{\ge}{\geqslant}
\newcommand{\Be}{\begin{equation}}
\newcommand{\Ees}{\end{equation*}}
\newcommand{\Bes}{\begin{equation*}}
\newcommand{\Ee}{\end{equation}}
\newcommand{\R}{\mathbb{R}}
\newcommand{\e}{\varepsilon}
\newcommand{\PP}{\mathbb{P}}
\begin{document}

\title[A generalized Catoni's ${\rm M}$-estimator under finite $\alpha$-th moment assumption]
{A generalized Catoni's ${\rm M}$-estimator under finite {$\alpha$-th moment assumption} with $\alpha \in (1,2)$}
\author[P. Chen]{Peng Chen}
\address{Peng Chen: 1. Department of Mathematics,
Faculty of Science and Technology,
University of Macau,
Av. Padre Tom\'{a}s Pereira, Taipa
Macau, China; \ \ 2. UM Zhuhai Research Institute, Zhuhai, China.}
\email{yb77430@um.edu.mo}

\author[X. Jin]{Xinghu Jin}
\address{Xinghu Jin: 1. Department of Mathematics,
Faculty of Science and Technology,
University of Macau,
Av. Padre Tom\'{a}s Pereira, Taipa
Macau, China; \ \ 2. UM Zhuhai Research Institute, Zhuhai, China.}
\email{yb77438@um.edu.mo}

\author[X. Li]{Xiang Li}
\address{Xiang Li: 1. Department of Mathematics,
Faculty of Science and Technology,
University of Macau,
Av. Padre Tom\'{a}s Pereira, Taipa
Macau, China; \ \ 2. UM Zhuhai Research Institute, Zhuhai, China.}
\email{yc07904@um.edu.mo}

\author[L. Xu]{Lihu Xu}
\address{Lihu Xu: 1. Department of Mathematics,
Faculty of Science and Technology,
University of Macau,
Av. Padre Tom\'{a}s Pereira, Taipa
Macau, China; \ \ 2. UM Zhuhai Research Institute, Zhuhai, China.}
\email{lihuxu@um.edu.mo}
\begin{abstract}
We generalize the { ${\rm M}$-estimator} put forward by Catoni in his seminal paper \cite{C12} to the case in which samples can have finite $\alpha$-th moment with $\alpha \in (1,2)$ rather than finite variance, our approach is by slightly modifying the influence function $\varphi$ therein. The choice of the new influence function is inspired by the Taylor-like expansion developed in \cite{C-N-X}.  We obtain a deviation bound of the estimator, as $\alpha \rightarrow 2$, this bound is the same as that in \cite{C12}. Experiment shows that our generalized ${\rm M}$-estimator performs better than the empirical mean estimator, the smaller the $\alpha$ is, the better the performance will be. As an application, we study an $\ell_{1}$ regression considered by Zhang et al. \cite{Z-Z} who assumed that samples have finite variance, and relax their assumption to be finite {$\alpha$-th} moment with $\alpha \in (1,2)$.
\end{abstract}

\maketitle

\section{Introduction}
Let $X_{1},...,X_{n}$ be a {sequence of samples} drawn from a distribution, its empirical mean estimator is defined by
$$\bar X=\frac{X_{1}+...+X_{n}}n.$$
$\bar X$ has an
optimal minimax mean square error among all mean estimators in all models including Gaussian distributions, but its deviation is suboptimal for heavy tail distribution \cite{C12}.

Catoni put forward in his seminal paper \cite{C12} a new ${\rm M}$-estimator for {heavy-tailed} samples with finite variance by solving the following equation about $\theta$
\begin{align*}
\sum_{i=1}^{n}\varphi\big(\beta(X_{i}-{\theta})\big)=0 \ \quad {\rm with} \quad  \ -\log\big(1-x+\frac{|x|^{2}}{2}\big)\leq\varphi(x)\leq\log\big(1+x+\frac{|x|^{2}}{2}\big),
\end{align*}
where $\beta>0$ is a parameter to be tuned and $\varphi$ is non-decreasing and called influence function. The deviation performance of this estimator is much better than $\bar X$. Catoni's idea has been broadly applied to many research problems, see for instance \cite{B-C,W-M,F-W,H18,H,L-M,S-Z-F,X-Z}. The finite variance assumption plays an important role in Catoni's analysis, but it rules out many interesting distributions such as Pareto law \cite{K17,X,C-N-X,J-L-L}, which describes the distributions of wealth and social networks.

We generalize Catoni's ${\rm M}$-estimator to the case in which samples can have finite $\alpha$-th moment with $\alpha \in (1,2)$. Our approach is by replacing Catoni's influence function with the one satisfying $-\log\big(1-x+\frac{|x|^{\alpha}}{\alpha}\big)\leq\varphi(x)\leq\log\big(1+x+\frac{|x|^{\alpha}}{\alpha}\big)$.  The choice of the new influence function is inspired by the Taylor-like expansion developed in \cite{C-N-X}. By an argument very similar to Catnoi's, we obtain a deviation upper bound which coincides with that in \cite{C12} as $\alpha \uparrow 2$ (see Theorem \ref{main1} and Remark \ref{r:MThm} below). Experiment shows that our generalized ${\rm M}$-estimator performs better than the empirical mean estimator, the smaller the $\alpha$ is, the better the performance will be.

Catoni's argument for finding ${\rm M}$-estimator can be split into two steps. The first step is to find two deterministic values $\theta_{-}$ and $\theta_{+}$, both depending on a parameter $\beta$ to be tuned later, such that the ${\rm M}$-estimator $\hat \theta$ falls between $\theta_{-}$ and $\theta_{+}$ with a high probability.  The $\theta_{-}$ and $\theta_{+}$ were obtained explicitly by solving two quadratic algebraic equations $B_{-}(\theta)=0$ and $B_{+}(\theta)=0$ respectively in \cite{C12}, whereas in our setting the corresponding equations are not quadratic and the solutions do not have explicit forms. Alternatively, we first prove that $B_{-}(\theta)=0$ has a largest solution, while $B_{+}(\theta)=0$ has a smallest one, and then use them as a replacement of $\theta_{-}$ and $\theta_{+}$ in our analysis.  The second step is to show that as one chooses $\beta>0$ sufficiently small, the difference between $\theta_{-}$ and $\theta_{+}$ can be as small as we wish,  whence the estimator can be localized in a small interval with a high probability. In our analysis, we also {choose} a sufficiently small $\beta$ (depending on $\alpha$) to make our estimator fall in a small interval with the above special solutions as its two end points. As $\alpha \uparrow 2$, our result coincides with that in \cite{C12}.
\vskip 3mm

As an application of our generalized estimator, we consider the $\ell_{1}$- regression with heavy-tailed samples studied by Zhang et al. \cite{Z-Z} who assumed the samples have finite variance. The linear regression considered in \cite{Z-Z} aims to find the minimizer $\theta^{*}$ of the minimization problem as follows
\begin{align*}
\min_{\mathbf{\theta}\in\mathbf{\Theta}} R_{\ell_{1}}(\theta) \ \ \ \ {\rm with} \ \ \ R_{\ell_{1}}(\theta)=\mathbb{E}_{(\mathbf{x},y)\sim{\bf\Pi}}\big[|\mathbf{x}^{T}\mathbf{\theta}-y|\big],
\end{align*}
where ${\bf\Pi}$ is a probability distribution, and ${ \mathbf{\Theta}\subseteq\mathbb{R}^{d}}$ is the set in which $\theta^{*}$ is located. In practice, $\mathbf{\Pi}$ is not known, one usually draws a data set $\mathcal{T}=(\mathbf{x}_{1},y_{1}),\cdots,(\mathbf{x}_{n},y_{n})$ from $\mathbf{\Pi}$ and considers the following empirical optimization problem:
\begin{align*}
\min_{\mathbf{\theta}\in\mathbf{\Theta}}\widehat{R}_{\ell_{1}}(\mathbf{\theta}) \ \ \ {\rm with} \ \ \ \widehat{R}_{\ell_{1}}(\mathbf{\theta})=\frac{1}{n}\sum_{i=1}^{n}|\mathbf{x}_{i}^{T}\mathbf{\theta}-y_{i}|.
\end{align*}
The theoretical guarantees for bounded or sub-Gaussian distributed ${\bf\Pi}$ have been discussed in many papers, see for instance \cite{B-M,K11,Z-Y-J}.

Inspired by Catoni's work, Zhang et al. considered the case that ${\bf\Pi}$ is {heavy-tailed} with finite variance and proposed a new minimization problem
$$\min_{\mathbf{\theta}\in\mathbf{\Theta}} \widehat R_{\varphi, \ell_{1}}(\theta) \ \ \ {\rm with} \ \ \ \widehat R_{\varphi, \ell_{1}}(\theta)=\frac{1}{n\beta}\sum_{i=1}^{n}\varphi(\beta|y_{i}-\mathbf{x}_{i}^{T}\theta|),$$
where $\varphi$ is the same as that in \cite{C12} and $\beta>0$ is a parameter to be tuned. A new estimator was established from this minimization problem and an error bound was obtained. When the sample size $n$ tends to infinity, this error bound tends to zero.

Thanks to the analysis of Section \ref{deviation}, we extend the results in \cite{Z-Z} to the case in which samples can have finite $\alpha$-th moment with $\alpha \in (1,2)$, our approach is by replacing the original $\varphi$ with the one in Section \ref{deviation} and solving the corresponding minimization problem. We establish a similar error bound for our estimator and prove that it tends to zero as $n \rightarrow \infty$.
\vskip 3mm
The organization of this paper is as follows. In Section \ref{deviation}, we give the deviation analysis of ${\rm M}$-estimator for the case that only $\alpha$-th central moment is known and show that the ${\rm M}$-estimator has a better performance compared with the empirical mean. In Section \ref{empirical}, we state the upper bounds and the corresponding lower bounds on the empirical mean. In the last section, we discuss the $\ell_{1}$-regression with heavy-tailed distributions (variance equals infinity).

\section{A generalized Catoni's ${\rm M}$-estimator and its deviation analysis}\label{deviation}

Let $(X_{i})_{i=1}^{n}$ be a sequence of i.i.d. samples drawn from some unknown probability distribution $\mathbf{\Pi}$ on $\mathbb{R}$.
We assume that there exists some $\alpha \in (1,2)$ such that
{
$$\mathbb{E}|X_{1}|^{\alpha}<\infty.$$}
Further denote
{
\begin{align*}
m=\mathbb{E}[X_{1}], \quad \quad v=\mathbb{E}|X_{1}-m|^{\alpha}.
\end{align*}
}

Inspired from Catoni's idea in \cite{C12} and the Taylor-like expansion develop in \cite{C-N-X}, we consider a non-decreasing function $\varphi:\mathbb{R}\rightarrow\mathbb{R}$ such that
\begin{align}\label{trunc}
-\log\big(1-x+\frac{|x|^{\alpha}}{\alpha}\big)\leq\varphi(x)\leq\log\big(1+x+\frac{|x|^{\alpha}}{\alpha}\big).
\end{align}
The $widest$ possible choice of $\varphi$ (see Figure \ref{figure1}) compatible with these inequalities is
{
\begin{align*}
\varphi(x)=
\begin{cases}
\log\big(1+x+\frac{x^{\alpha}}{\alpha}\big), \quad &x\geq0,\\
-\log\big(1-x+\frac{|x|^{\alpha}}{\alpha}\big), \quad &x<0.
\end{cases}
\end{align*}
}
\begin{figure}
\centering
\includegraphics[height=8cm,width=14cm]{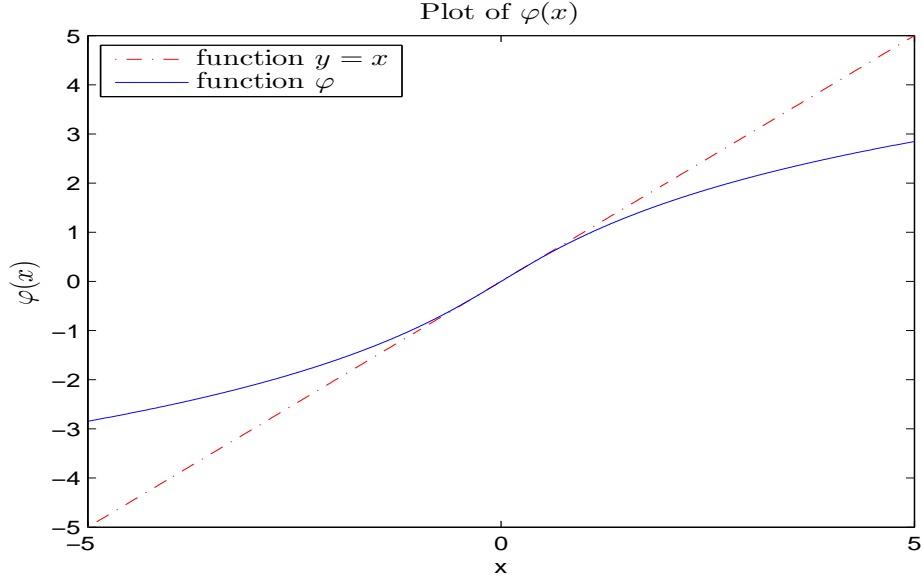}
\caption{$widest$ possible choice of $\varphi$}
\label{figure1}
\end{figure}

Let $\beta$ be some {strictly} positive real parameter that will be chosen later and denote the estimator of the mean $m$ by $\hat{\theta},$ which is the solution to the equation
\begin{align*}
\sum_{i=1}^{n}\varphi\big(\beta(X_{i}-\hat{\theta})\big)=0.
\end{align*}
For further use, we denote
\begin{align}\label{con}
r(\theta)=\frac{1}{\beta n}\sum_{i=1}^{n}\varphi\big(\beta(X_{i}-\theta)\big), \quad \theta\in\mathbb{R}.
\end{align}
It is easy to see $r(\theta)$ is a non-increasing random variable since $\varphi$ is non-decreasing.

Let us briefly explain how to find the estimator $\hat \theta$ into the following two steps: the first step is to find two deterministic values $\theta_{-}$ and $\theta_{+}$, both depending on $\beta$, such that $r(\theta_{-})>0>r(\theta_{+})$ with a high probability, from the non-decreasing property of $r$, we know that $\theta_{-}<\hat{\theta}<\theta_{+}$ holds with a high probability; the second is to show that as we choose $\beta>0$ sufficiently small, the difference between $\theta_{-}$ and $\theta_{+}$ can be as small as we wish,  whence the estimator can be localized in a small interval with a high probability.

\begin{lemma}
Keep the same notations and assumptions as above. Then, {for any $\theta \in \R$,} we have
\begin{align}\label{ineq1}
\mathbb{E}\big[\exp\big(\beta nr(\theta)\big)\big]\leq\exp\Big(n\beta(m-\theta)+\frac{2^{\alpha-1}n\beta^{\alpha}}{\alpha}\big(v+|m-\theta|^{\alpha}\big)\Big)
\end{align}
and
\begin{align}\label{ineq2}
\mathbb{E}\big[\exp\big(-\beta nr(\theta)\big)\big]
\leq \exp\Big(-n\beta(m-\theta)+\frac{2^{\alpha-1}n\beta^{\alpha}}{\alpha}\big(v+|m-\theta|^{\alpha}\big)\Big).
\end{align}
\end{lemma}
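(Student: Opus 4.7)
The plan is to exponentiate the defining bounds on $\varphi$ in \eqref{trunc}, exploit independence of the samples, and then reduce everything to a single moment inequality for $|X_{1}-\theta|^{\alpha}$.

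First I would rewrite $\beta n r(\theta)=\sum_{i=1}^{n}\varphi(\beta(X_{i}-\theta))$ so that, by i.i.d. of the $X_{i}$,
\begin{equation*}
\mathbb{E}\bigl[\exp(\beta n r(\theta))\bigr]=\bigl(\mathbb{E}\bigl[\exp\bigl(\varphi(\beta(X_{1}-\theta))\bigr)\bigr]\bigr)^{n}.
\end{equation*}
The upper bound in \eqref{trunc} gives $\exp(\varphi(x))\leq 1+x+\frac{|x|^{\alpha}}{\alpha}$, applied with $x=\beta(X_{1}-\theta)$ and then taking expectations, this yields
\begin{equation*}
\mathbb{E}\bigl[\exp\bigl(\varphi(\beta(X_{1}-\theta))\bigr)\bigr]\le 1+\beta(m-\theta)+\frac{\beta^{\alpha}}{\alpha}\,\mathbb{E}|X_{1}-\theta|^{\alpha}.
\end{equation*}

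Next I would control the $\alpha$-th moment on the right by writing $X_{1}-\theta=(X_{1}-m)+(m-\theta)$ and invoking the elementary inequality $|a+b|^{\alpha}\le 2^{\alpha-1}(|a|^{\alpha}+|b|^{\alpha})$ valid for $\alpha\ge 1$, which delivers
\begin{equation*}
\mathbb{E}|X_{1}-\theta|^{\alpha}\le 2^{\alpha-1}\bigl(v+|m-\theta|^{\alpha}\bigr).
\end{equation*}
Combining the last two displays and using $1+y\le e^{y}$ on the resulting one-sample bound, then taking the $n$-th power, produces exactly \eqref{ineq1}.

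For \eqref{ineq2} the argument is symmetric: the lower bound in \eqref{trunc} rearranges to $\exp(-\varphi(x))\le 1-x+\frac{|x|^{\alpha}}{\alpha}$, and the same three ingredients (i.i.d. factorisation, the $|a+b|^{\alpha}$ inequality, and $1+y\le e^{y}$) yield the claim with $m-\theta$ replaced by $-(m-\theta)$ in the linear term while leaving the $\alpha$-th moment term unchanged. No real obstacle is expected; the only delicate point is making sure the constant $2^{\alpha-1}$ is correctly tracked through the $|a+b|^{\alpha}$ step, which is why I would isolate that inequality as the sole non-trivial tool.
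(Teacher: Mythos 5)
Your proposal is correct and follows essentially the same route as the paper's proof: factorise by independence, exponentiate the bounds in \eqref{trunc}, apply the $C_{r}$-inequality $|a+b|^{\alpha}\le 2^{\alpha-1}(|a|^{\alpha}+|b|^{\alpha})$ to $X_{1}-\theta=(X_{1}-m)+(m-\theta)$, and finish with $1+y\le e^{y}$. No gaps.
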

\begin{proof}
We first recall the ${\rm C}_{r}$-inequality, that is, for any $a,b\in\mathbb{R}$ and $p>0,$ we have
\begin{align*}
|a+b|^{p}\leq\max\{2^{p-1},1\}(|a|^{p}+|b|^{p}),
\end{align*}
 and if $p>1,$ it is easy to verify
\begin{align}\label{CR}
 |a+b|^{p}\leq2^{p-1}(|a|^{p}+|b|^{p}).
\end{align}
Then, noting that $X_{i},$ $i=1,\cdots,n$ are i.i.d., by (\ref{trunc}), we have
\begin{align*}
\mathbb{E}\big[\exp\big(\beta nr(\theta)\big)\big]=&\mathbb{E}\Big[\exp\big[\sum_{i=1}^{n}\varphi\big(\beta(X_{i}-\theta)\big)\big]\Big]\\
=&\Big(\mathbb{E}\Big[\exp\big[\varphi\big(\beta(X_{1}-\theta)\big)\big]\Big]\Big)^{n}\\
\leq&\Big(\mathbb{E}\Big[1+\beta(X_{1}-\theta)+\frac{\beta^{\alpha}}{\alpha}|X_{1}-\theta|^{\alpha}\Big]\Big)^{n},
\end{align*}
and noting that $\alpha\in(1,2),$ by (\ref{CR}), we further have
\begin{align*}
\mathbb{E}\Big[\exp\big(\beta nr(\theta)\big)\Big]\leq&\Big[1+\beta(m-\theta)+\frac{\beta^{\alpha}}{\alpha}\mathbb{E}|X_{1}-m+m-\theta|^{\alpha}\Big]^{n}\\
\leq&\Big[1+\beta(m-\theta)+\frac{2^{\alpha-1}\beta^{\alpha}}{\alpha}(v+|m-\theta|^{\alpha})\Big]^{n}\\
\leq&\exp\Big(n\beta(m-\theta)+\frac{2^{\alpha-1}n\beta^{\alpha}}{\alpha}\big(v+|m-\theta|^{\alpha}\big)\Big),
\end{align*}
where the last inequality is by the inequality $1+x\leq e^{x}$ for any $x\in\mathbb{R},$ (\ref{ineq1}) is proved and the inequality (\ref{ineq2}) can be proved in the same way. The proof is complete.
\end{proof}

According to (\ref{ineq1}) and (\ref{ineq2}), for any {$\epsilon\in(0,\frac{1}{2}),$} we denote
\begin{align*}
B_{+}(\theta)=m-\theta+\frac{(2\beta)^{\alpha-1}}{\alpha}(v+|m-\theta|^{\alpha})+\frac{\log(\epsilon^{-1})}{n\beta},
\end{align*}
\begin{align*}
B_{-}(\theta)=m-\theta-\frac{(2\beta)^{\alpha-1}}{\alpha}(v+|m-\theta|^{\alpha})-\frac{\log(\epsilon^{-1})}{n\beta}.
\end{align*}

\begin{lemma}\label{highprobability}
Keep the same notations and assumptions as above. Then, {for any $\theta \in \R$,} we have
\begin{align}\label{high1}
\mathbb{P}\big(r(\theta)<B_{+}(\theta)\big)\geq1-\epsilon
\end{align}
and
\begin{align}\label{high2}
\mathbb{P}\big(r(\theta)>B_{-}(\theta)\big)\geq1-\epsilon.
\end{align}
In particular, {for any $\theta \in \R$,} we have
\begin{align}\label{high00}
\mathbb{P}\big(B_{-}(\theta)<r(\theta)<B_{+}(\theta)\big)\geq1-2\epsilon.
\end{align}
\end{lemma}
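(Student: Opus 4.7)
The plan is to deduce \eqref{high1} and \eqref{high2} from the exponential moment bounds \eqref{ineq1} and \eqref{ineq2} by a standard Chernoff-type argument, and then obtain \eqref{high00} via a union bound.

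For \eqref{high1}, I would apply Markov's inequality to $e^{\beta n r(\theta)}$ at the threshold $e^{\beta n B_{+}(\theta)}$. Writing
\[
\PP\bigl(r(\theta)\ge B_{+}(\theta)\bigr)=\PP\bigl(e^{\beta n r(\theta)}\ge e^{\beta n B_{+}(\theta)}\bigr)\le e^{-\beta n B_{+}(\theta)}\,\E\bigl[e^{\beta n r(\theta)}\bigr],
\]
and plugging in \eqref{ineq1}, the exponent becomes
\[
-\beta n B_{+}(\theta)+n\beta(m-\theta)+\tfrac{2^{\alpha-1}n\beta^{\alpha}}{\alpha}\bigl(v+|m-\theta|^{\alpha}\bigr).
\]
The definition of $B_{+}(\theta)$ is precisely engineered so that the first two contributions, plus the $\frac{2^{\alpha-1}n\beta^{\alpha}}{\alpha}(v+|m-\theta|^{\alpha})$ term cancel exactly with the corresponding piece of $\beta n B_{+}(\theta)$, leaving only $-\log(\epsilon^{-1})$. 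Exponentiating gives the bound $\epsilon$ and taking complements yields \eqref{high1}.

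The proof of \eqref{high2} is symmetric: apply Markov to $e^{-\beta n r(\theta)}$ at the level $e^{-\beta n B_{-}(\theta)}$, use \eqref{ineq2}, and again the definition of $B_{-}(\theta)$ produces the same cancellation down to $\log\epsilon$. Finally, \eqref{high00} follows directly from \eqref{high1}, \eqref{high2}, and a union bound on the two complementary events $\{r(\theta)\ge B_{+}(\theta)\}$ and $\{r(\theta)\le B_{-}(\theta)\}$.

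There is no serious obstacle here; the only thing to be careful about is bookkeeping the algebraic identity
\[
\beta n B_{+}(\theta)=n\beta(m-\theta)+\tfrac{2^{\alpha-1}n\beta^{\alpha}}{\alpha}\bigl(v+|m-\theta|^{\alpha}\bigr)+\log(\epsilon^{-1}),
\]
which is what makes $B_{\pm}$ the \emph{right} deterministic quantities to compare $r(\theta)$ against, and the analogous identity for $B_{-}(\theta)$. Once these are checked, the rest is a two-line Chernoff computation.
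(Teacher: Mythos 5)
Your proposal is correct and follows essentially the same route as the paper: Markov's inequality applied to $e^{\pm\beta n r(\theta)}$, the cancellation coming from the identity $\beta n B_{\pm}(\theta)=n\beta(m-\theta)\pm\frac{2^{\alpha-1}n\beta^{\alpha}}{\alpha}(v+|m-\theta|^{\alpha})\pm\log(\epsilon^{-1})$, and a union bound for \eqref{high00}. No gaps.
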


\begin{proof}
By Markov inequality and (\ref{ineq1}), we have
\begin{align*}
\mathbb{P}\big(r(\theta)\geq B_{+}(\theta)\big)=&\mathbb{P}\Big(\exp\big(n\beta r(\theta)\big)\geq\exp\big(n\beta B_{+}(\theta)\big)\Big)\\
\leq&\frac{\mathbb{E}\big[\exp\big(n\beta r(\theta)\big)\big]}{\exp\Big(n\beta\big(m-\theta+\frac{(2\beta)^{\alpha-1}}{\alpha}(v+|m-\theta|^{\alpha})+\frac{\log(\epsilon^{-1})}{n\beta}\big)\Big)}\\
\leq&\frac{\exp\Big(n\beta(m-\theta)+\frac{2^{\alpha-1}n\beta^{\alpha}}{\alpha}\big(v+|m-\theta|^{\alpha}\big)\Big)}
{\exp\Big(n\beta(m-\theta)+\frac{2^{\alpha-1}n\beta^{\alpha}}{\alpha}(v+|m-\theta|^{\alpha})+\log(\epsilon^{-1})\big)\Big)}=\epsilon,
\end{align*}
(\ref{high1}) is proved. With the help of (\ref{ineq2}), (\ref{high2}) can be proved in the same way. The estimate \eqref{high00} immediately follows from \eqref{high1} and \eqref{high2}.
\end{proof}

Let us now consider the following two equations:
\begin{align}\label{equa+}
B_{+}(\theta)=0,
\end{align}
\begin{align}\label{equa-}
B_{-}(\theta)=0,
\end{align}
the following lemma tells us that they both have {at least one solution.}


\begin{lemma}\label{existence}
 Choosing the {strictly}  positive real parameter $\beta$ satisfying
 \begin{align}\label{exisineq}
\beta^{\alpha}v\leq\frac{\alpha-1}{2^{\alpha}}-\frac{\alpha\log(\epsilon^{-1})}{2^{\alpha-1}n}.
\end{align}
Then, both equations (\ref{equa+}) and (\ref{equa-}) have at least one solution. Furthermore, choosing the parameter $\beta$ satisfying
\begin{align}\label{exisineq2}
\frac{(2\beta)^{\alpha-1}}{\alpha}(v+1)+\frac{\log(\epsilon^{-1})}{n\beta}\leq1,
\end{align}
then $\theta_{+}\in(m,m+1]$ and $\theta_{-}\in[m-1,m),$ where $\theta_{+}$ is the smallest solution to the equation $B_{+}(\theta)=0$ and $\theta_{-}$ is the largest solution to the equation $B_{-}(\theta)=0$.
\end{lemma}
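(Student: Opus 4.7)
The plan is to reduce each equation to a one-variable analysis by setting $u = |\theta - m|$, and then to use elementary calculus together with the intermediate value theorem.

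First I would observe that $B_{+}(\theta) \geq \frac{\log(\epsilon^{-1})}{n\beta} > 0$ whenever $\theta \leq m$, since the three summands in the definition of $B_{+}$ are all nonnegative and the last one is strictly positive. Hence every zero of $B_{+}$ must lie strictly above $m$. Restricting to $\theta > m$ and setting $u = \theta - m > 0$, $c = \frac{(2\beta)^{\alpha-1}}{\alpha}$, and $d = cv + \frac{\log(\epsilon^{-1})}{n\beta}$, the equation $B_{+}(\theta) = 0$ becomes $g(u) := cu^{\alpha} - u + d = 0$. A direct calculus check gives $g'(u) = \alpha c u^{\alpha-1} - 1$, which vanishes at the unique positive point $u^{*} = \frac{1}{2\beta}$; hence $g$ is strictly decreasing on $[0, u^{*}]$ and strictly increasing on $[u^{*}, \infty)$ with $g(0) = d > 0$. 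A short computation gives
\begin{align*}
g(u^{*}) = d - \frac{\alpha-1}{2\alpha\beta},
\end{align*}
and the inequality $g(u^{*}) \leq 0$, after multiplying by $2\alpha\beta$ and rearranging, is exactly the hypothesis (\ref{exisineq}). Therefore $g$ has at least one zero, and the smallest zero $\theta_{+}$ of $B_{+}$ corresponds to the unique zero of $g$ in $(0, u^{*}]$, which in particular yields $\theta_{+} > m$.

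To show $\theta_{+} \leq m + 1$, I would evaluate $g(1) = c + d - 1 = \frac{(2\beta)^{\alpha-1}}{\alpha}(1 + v) + \frac{\log(\epsilon^{-1})}{n\beta} - 1$, which is $\leq 0$ by hypothesis (\ref{exisineq2}). Together with $g(0) = d > 0$, the intermediate value theorem produces a zero of $g$ in $(0, 1]$, so the smallest positive zero satisfies $\theta_{+} - m \leq 1$. The argument for $B_{-}$ is entirely symmetric: since $B_{-}(\theta) < 0$ for $\theta \geq m$, every zero lies below $m$; setting $u = m - \theta$ transforms $B_{-}(\theta) = 0$ into $h(u) := -cu^{\alpha} + u - d = -g(u) = 0$. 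The maximum value $h(u^{*}) = \frac{\alpha-1}{2\alpha\beta} - d$ is nonnegative under (\ref{exisineq}), and $h(1) = 1 - c - d \geq 0$ under (\ref{exisineq2}), so the smallest positive zero of $h$, which corresponds to the largest zero $\theta_{-}$ of $B_{-}$, lies in $(0, 1]$, giving $\theta_{-} \in [m - 1, m)$.

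The main obstacle is not any single technical step but the bookkeeping: one must identify the critical point $u^{*} = \frac{1}{2\beta}$ explicitly so that the abstract existence condition $g(u^{*}) \leq 0$ aligns algebraically with (\ref{exisineq}), and one must carefully track the translation between the \emph{smallest} solution of $B_{+}$ and the \emph{largest} solution of $B_{-}$ through the change of variables $u = |\theta - m|$.
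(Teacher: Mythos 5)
Your proposal is correct and follows essentially the same route as the paper: after the change of variables $u=|\theta-m|$ (which the paper performs implicitly), you locate the critical point at $u^{*}=\frac{1}{2\beta}$, verify that the minimum value of $B_{+}$ is nonpositive under \eqref{exisineq}, and use the sign of $B_{+}(m+1)$ under \eqref{exisineq2} together with the intermediate value theorem, exactly as in the paper's argument. All the algebraic identifications (of $g(u^{*})\leq 0$ with \eqref{exisineq} and of $g(1)\leq 0$ with \eqref{exisineq2}) check out.
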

\begin{proof}
When $\theta\leq m,$ $B_{+}(\theta)>0.$ When $\theta>m,$ the derivative of function $B_{+}(\theta)$ is
\begin{align*}
B_{+}'(\theta)=-1+(2\beta)^{\alpha-1}(\theta-m)^{\alpha-1}.
\end{align*}
It is easy to verify that $B_{+}(\theta)$ is decreasing in $(m,m+\frac{1}{2\beta})$ and increasing in $[m+\frac{1}{2\beta},\infty),$ so $\theta=m+\frac{1}{2\beta}$ is the minimum point of function $B_{+}(\theta)$ in $(m,\infty).$ What's more, since $\lim_{\theta\rightarrow\infty}B_{+}(\theta)=\infty$ and by (\ref{exisineq}), we have
\begin{align*}
B_{+}(m+\frac{1}{2\beta})=&\frac{(2\beta)^{\alpha-1}}{\alpha}v+\frac{1}{2\alpha\beta}+\frac{\log(\epsilon^{-1})}{n\beta}-\frac{1}{2\beta}\\
=&\frac{2^{\alpha-1}}{\alpha\beta}\big(\beta^{\alpha}v-\frac{\alpha-1}{2^{\alpha}}+\frac{\alpha\log(\epsilon^{-1})}{2^{\alpha-1}n}\big)\leq0.
\end{align*}
When $B_{+}(m+\frac{1}{2\beta})=0,$ that is, $\beta^{\alpha}v=\frac{\alpha-1}{2^{\alpha}}-\frac{\alpha\log(\epsilon^{-1})}{2^{\alpha-1}n},$ then the equation $B_{+}(\theta)=0$ has the unique solution $\theta=m+\frac{1}{2\beta}.$ When $B_{+}(m+\frac{1}{2\beta})<0,$ that is, $\beta^{\alpha}v<\frac{\alpha-1}{2^{\alpha}}-\frac{\alpha\log(\epsilon^{-1})}{2^{\alpha-1}n},$ then the equation $B_{+}(\theta)=0$ has two solutions.

Furthermore, when $\theta\leq m,$
\begin{align*}
B_{+}(\theta)>0,
\end{align*}
which implies that {the smallest solution} $\theta_{+}>m.$ By (\ref{exisineq2}), we have
\begin{align*}
B_{+}(m+1)=-1+\frac{(2\beta)^{\alpha-1}}{\alpha}(v+1)+\frac{\log(\epsilon^{-1})}{n\beta}\leq0.
\end{align*}
Hence, there exists $\tilde{\theta}\in(m,m+1]$ such that $B_{+}(\tilde{\theta})=0.$ Since $\theta_{+}$ is the smallest solution to the equation $B_{+}(\theta)=0,$ we have $\theta_{+}\in(m,m+1].$

By the same argument as above, we can also prove the equation $B_{-}(\theta)=0$ has at least one solution and $\theta_{-}\in[m-1,m)$. The proof is complete.
\end{proof}
\begin{lemma}
Let $\beta, \theta_{-}$ and $\theta_{+}$ be specified in Lemma \ref{existence}. We have
\begin{align}\label{three}
\PP\left(\theta_{-} \le \hat{\theta} \le \theta_{+}\right) \ge {1-2 \epsilon}.
\end{align}
\end{lemma}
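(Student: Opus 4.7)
The strategy is to convert the pointwise deviation bounds of Lemma \ref{highprobability}, evaluated at the specific points $\theta_-$ and $\theta_+$, into an enclosing statement about the random zero $\hat\theta$ of $r(\cdot)$, using the monotonicity of $r$. The key observation is that by the very definitions of $\theta_+$ and $\theta_-$ we have $B_+(\theta_+)=0$ and $B_-(\theta_-)=0$, so the inequalities $r(\theta)<B_+(\theta)$ and $r(\theta)>B_-(\theta)$ collapse at these points to statements about the sign of $r$.

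First, I would apply \eqref{high1} with $\theta=\theta_+$ to obtain $\PP\!\left(r(\theta_+)<0\right)\ge 1-\epsilon$, and symmetrically apply \eqref{high2} with $\theta=\theta_-$ to obtain $\PP\!\left(r(\theta_-)>0\right)\ge 1-\epsilon$. A union bound then yields
\begin{equation*}
\PP\!\left(r(\theta_-)>0>r(\theta_+)\right)\ \ge\ 1-2\epsilon.
\end{equation*}

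Second, on this event I would invoke the monotonicity of $r$ to conclude. Because $\varphi$ is non-decreasing, $r(\theta)$ is a non-increasing (in fact continuous) function of $\theta$; hence $r(\theta_-)>0$ together with $r(\hat\theta)=0$ forces $\hat\theta\ge\theta_-$, and $r(\theta_+)<0$ together with $r(\hat\theta)=0$ forces $\hat\theta\le\theta_+$. Combining gives $\theta_-\le\hat\theta\le\theta_+$ on the event of probability at least $1-2\epsilon$, which is the desired conclusion \eqref{three}.

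There is essentially no technical obstacle here: the inequalities \eqref{high1} and \eqref{high2} have already done the probabilistic work, and the remainder is just monotonicity. The only point worth a brief comment is that the event we want is written in terms of $\hat\theta$, whereas our bounds control $r$ at two deterministic points; the bridge is precisely the non-increasing character of $r$, which is why Catoni's construction localizes $\hat\theta$ through sign information of $r$ at $\theta_\pm$ rather than through direct tail bounds on $\hat\theta$ itself.
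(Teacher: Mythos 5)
Your proof is correct and follows essentially the same route as the paper: establish $r(\theta_-)>0>r(\theta_+)$ with probability at least $1-2\epsilon$ and then use the non-increasing, continuous character of $r$ to trap $\hat\theta$ between $\theta_-$ and $\theta_+$. If anything, your version is slightly more careful, since the paper invokes \eqref{high00} (stated for a single $\theta$) where what is actually needed is the union bound over the two distinct points $\theta_-$ and $\theta_+$ obtained from \eqref{high2} and \eqref{high1} respectively, which is exactly what you wrote.
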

\begin{proof} By \eqref{high00}, we know that the following event holds with a probability {at least} $1-2\epsilon$: 
$$r(\theta_{-})>0 \ \ \ {\rm and} \ \ \ r(\theta_{+})<0.$$
Since $r(\theta)$ is a continuous function and non-increasing,  $r(\theta)=0$ has a solution $\hat \theta$ between $\theta_{-}$ and $\theta_{+}$ such that
$$\theta_{-} \le \hat \theta \le \theta_{+}$$
holds with a probability {at least} $1-2 \epsilon$. 
\end{proof}
\begin{remark}
From the above proof, it is easy to deduce (\ref{exisineq}) from (\ref{exisineq2}), that is, (\ref{exisineq2}) is a stronger condition.
\end{remark}

Now, we can give the main result in this section, which can give a deviation upper bound for the ${\rm M}$-estimator $\hat{\theta}$.

\begin{theorem}\label{main1}
For any {$\epsilon\in(0,\frac{1}{2})$} and positive integer $n,$ we assume
\begin{align}\label{assu}
n\geq(\frac{2v+1}{\alpha})^{\frac{\alpha}{\alpha-1}}\frac{2\alpha\log(\epsilon^{-1})}{v}.
\end{align}
Then, the inequality
\begin{align*}
|m-\hat{\theta}|< \frac{2\big(\frac{2\alpha\log(\epsilon^{-1})}{n}\big)^{\frac{\alpha-1}{\alpha}}v^{\frac{1}{\alpha}}}{\alpha-\big(\frac{2\alpha\log(\epsilon^{-1})}{nv}\big)^{\frac{\alpha-1}{\alpha}}}
=O\Big(\big(\log(\epsilon^{-1})\big)^{\frac{\alpha-1}{\alpha}}n^{-\frac{\alpha-1}{\alpha}}\Big)
\end{align*}
holds with probability {at least} $1-2\epsilon.$
\end{theorem}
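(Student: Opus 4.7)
The plan is to choose $\beta$ optimally to balance the two controllable terms in $B_\pm(\theta)$, then use the bracketing $\theta_- < \hat\theta < \theta_+$ provided by \eqref{three} to transfer a bound on the deterministic quantities $\theta_\pm - m$ to the estimator. Concretely, I would pick $\beta$ so that the variance-type term and the confidence term coincide, namely $\frac{(2\beta)^{\alpha-1}v}{\alpha} = \frac{\log(\epsilon^{-1})}{n\beta}$, which forces
\begin{equation*}
2\beta = \left(\frac{2\alpha \log(\epsilon^{-1})}{nv}\right)^{1/\alpha}.
\end{equation*}
Writing $c = \frac{(2\beta)^{\alpha-1}}{\alpha} = \frac{1}{\alpha}\bigl(\frac{2\alpha\log(\epsilon^{-1})}{nv}\bigr)^{(\alpha-1)/\alpha}$, a direct manipulation shows that the hypothesis \eqref{assu} is equivalent to $c(2v+1)\le 1$, which is precisely condition \eqref{exisineq2} of Lemma \ref{existence}.

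With this $\beta$ fixed, Lemma \ref{existence} yields $\theta_+\in(m,m+1]$ and $\theta_-\in[m-1,m)$, and Lemma \ref{highprobability} applied at $\theta=\theta_\pm$ together with \eqref{three} gives $r(\theta_-)>0>r(\theta_+)$ on an event of probability at least $1-2\epsilon$; since $r$ is continuous and non-increasing and $r(\hat\theta)=0$, the strictness of these inequalities forces $\theta_-<\hat\theta<\theta_+$ strictly. Next I would bound $\delta := \theta_+-m$: the defining equation $B_+(\theta_+)=0$, combined with the balancing choice of $\beta$, reduces to the implicit identity
\begin{equation*}
\delta = 2cv + c\delta^{\alpha}.
\end{equation*}
Because $\delta\in(0,1]$ and $\alpha>1$, we have $\delta^{\alpha}\le\delta$, whence $\delta(1-c)\le 2cv$; together with $c\le 1/(2v+1)<1$ (from \eqref{assu}) this gives $\delta\le\frac{2cv}{1-c}$. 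The symmetric argument on $B_-$ yields $m-\theta_-\le\frac{2cv}{1-c}$, so in total $|m-\hat\theta|<\frac{2cv}{1-c}$ on the event. Substituting the explicit values of $c$ and $v$ into $\frac{2cv}{1-c}$ and simplifying recovers the stated closed form.

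The main subtlety is selecting the balancing $\beta$: one wants the implicit fixed-point equation for $\delta$ to admit a clean explicit bound via the elementary inequality $\delta^{\alpha}\le\delta$ on $[0,1]$, and simultaneously to reproduce the denominator $\alpha-(\frac{2\alpha\log(\epsilon^{-1})}{nv})^{(\alpha-1)/\alpha}$ in the theorem; the particular choice above is essentially the unique one that achieves both. Once $\beta$ is chosen, the remaining work is algebraic bookkeeping, plus the observation that the strict inequality in the conclusion comes from the \emph{strict} inequalities $r(\theta_\pm)\gtrless 0$, which place $\hat\theta$ in the open interval $(\theta_-,\theta_+)$.
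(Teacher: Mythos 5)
Your proposal is correct and follows essentially the same route as the paper: the same balancing choice $\beta=\frac12\bigl(\frac{2\alpha\log(\epsilon^{-1})}{nv}\bigr)^{1/\alpha}$, the same use of Lemma \ref{existence} and the bracketing \eqref{three}, and the same key inequality $\delta^{\alpha}\le\delta$ on $(0,1]$ applied to the equation $B_{+}(\theta_{+})=0$ (the paper merely performs the substitution of $\beta$ after, rather than before, deriving the bound on $\theta_{+}-m$). Your remark on the strictness of $\theta_{-}<\hat\theta<\theta_{+}$ is a small but welcome tightening of the paper's final step, which only records a non-strict inequality.
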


\begin{remark} \label{r:MThm}
In Theorem \ref{main1}, if we take $\alpha=2,$ then the right hand side is $O\Big(\big(\log(\epsilon^{-1})\big)^{\frac{1}{2}}n^{-\frac{1}{2}}\Big),$ which has the same order as \cite[Proposition 2.4]{C12}.
\end{remark}

\begin{proof}
By (\ref{assu}),  (\ref{exisineq2}) is fulfilled and thus
 Lemma \ref{existence} tells us that the equation
\begin{align*}
B_{+}(\theta)=m-\theta+\frac{(2\beta)^{\alpha-1}}{\alpha}(v+|m-\theta|^{\alpha})+\frac{\log(\epsilon^{-1})}{n\beta}=0
\end{align*}
has the smallest solution $\theta_{+}\in(m,m+1].$ Denote $x=m-\theta,$ $x_{+}=m-\theta_{+}$ and
\begin{align*}
\tilde{B}_{+}(x)=x+\frac{(2\beta)^{\alpha-1}}{\alpha}(v+|x|^{\alpha})+\frac{\log(\epsilon^{-1})}{n\beta},
\end{align*}
then $\tilde{B}_{+}(x_{+})=0$ with $x_{+}\in[-1,0).$ Since $\alpha \in (1,2)$, we further have
\begin{align}\label{coup}
x_{+}+\frac{(2\beta)^{\alpha-1}}{\alpha}(v-x_{+})+\frac{\log(\epsilon^{-1})}{n\beta}\geq0.
\end{align}
By (\ref{exisineq2}), we have $1-\frac{(2\beta)^{\alpha-1}}{\alpha}>0,$ solving \eqref{coup} gives
\begin{align*}
x_{+}\geq-\frac{(2\beta)^{\alpha-1}v+\frac{\alpha\log(\epsilon^{-1})}{n\beta}}{\alpha-(2\beta)^{\alpha-1}}.
\end{align*}
Let $(2\beta)^{\alpha-1}v=\frac{\alpha\log(\epsilon^{-1})}{n\beta},$ i.e., $\beta=\frac{1}{2}\big(\frac{2\alpha\log(\epsilon^{-1})}{nv})^{\frac{1}{\alpha}}$, we have
\begin{align*}
m-\theta_{+}\geq-\frac{2\big(\frac{2\alpha\log(\epsilon^{-1})}{n}\big)^{\frac{\alpha-1}{\alpha}}v^{\frac{1}{\alpha}}}{\alpha-\big(\frac{2\alpha\log(\epsilon^{-1})}{nv}\big)^{\frac{\alpha-1}{\alpha}}}.
\end{align*}
Similarly, we get
\begin{align*}
m-\theta_{-}\leq \frac{2\big(\frac{2\alpha\log(\epsilon^{-1})}{n}\big)^{\frac{\alpha-1}{\alpha}}v^{\frac{1}{\alpha}}}{\alpha-\big(\frac{2\alpha\log(\epsilon^{-1})}{nv}\big)^{\frac{\alpha-1}{\alpha}}}.
\end{align*}
Then, (\ref{three}) implies that the inequality
\begin{align*}
|\hat{\theta}-m| \le \frac{2\big(\frac{2\alpha\log(\epsilon^{-1})}{n}\big)^{\frac{\alpha-1}{\alpha}}v^{\frac{1}{\alpha}}}{\alpha-\big(\frac{2\alpha\log(\epsilon^{-1})}{nv}\big)^{\frac{\alpha-1}{\alpha}}}
\end{align*}
holds with probability at least $1-2\epsilon$.
\end{proof}

The empirical mean estimator is defined by
\begin{align*}
{ \bar{X}=\frac{1}{n}\sum_{i=1}^{n}X_{i},}
\end{align*}
we postpone to study deviation bounds for the empirical mean $\bar{X}$ in Section \ref{empirical} below.

In Figures \ref{figure2}-\ref{figure5}, we compare the bound on the deviations of the ${\rm M}$-estimator $\hat{\theta}$ with the deviations of the empirical mean $\bar{X}$, when the sample distribution is {a Pareto distribution with shape parameter $\frac{2+\alpha}{2}$ and scale parameter $(\frac{2+\alpha}{2-\alpha})^{-\frac{1}{\alpha}}$ (see, e.g., \cite[Chapter 23]{K17}), that is,
\begin{align*}
\mathbb{P}(X_{1}\geq x)=2^{-1}(\frac{2+\alpha}{2-\alpha})^{-\frac{2+\alpha}{2\alpha}}x^{-\frac{2+\alpha}{2}},\quad x\geq(\frac{2+\alpha}{2-\alpha})^{-\frac{1}{\alpha}},
\end{align*}
\begin{align*}
\mathbb{P}(X_{1}\leq x)=2^{-1}(\frac{2+\alpha}{2-\alpha})^{-\frac{2+\alpha}{2\alpha}}(-x)^{-\frac{2+\alpha}{2}},\quad x\leq-(\frac{2+\alpha}{2-\alpha})^{-\frac{1}{\alpha}}.
\end{align*}
By the definition, it is easy to verify that $m=\mathbb{E}X_{1}=0$ and $v=\mathbb{E}|X_{1}-m|^{\alpha}=1$.} We can get figures for the upper bound of $\hat{\theta}$, the {upper bound and lower bound} of ${\bar{X}}$. It is obvious from Figures \ref{figure2}-\ref{figure5} that
the $\hat{\theta}$ has a better performance when $\epsilon$ is small enough.
We can also see that the smaller the $\alpha$ is, the better the performance of
$\hat \theta$ will be comparing with that of ${\bar{X}}$. The parameters for Figures \ref{figure2}-\ref{figure5} are in Table \ref{Tab1}, where $0.001:0.001:0.08$ {means the range of $\epsilon$ is from $0.001$ to $0.08$ with step-size $0.001$. The ranges of $\epsilon$ in Table \ref{Tab1} satisfy \eqref{en} and the values of $n$ in Table \ref{Tab1} satisfy \eqref{assu} and \eqref{en}.}
\begin{table}[htbp]
\centering
\caption{ Parameters in Figures \ref{figure2}-\ref{figure5} }
\begin{tabular}{c|c|c|c}
\hline
&$\alpha$  & $\epsilon$ & n  \\
\hline
Figure \ref{figure2}
& 1.9 & 0.001:0.001:0.08  &   500 \\
 \hline
Figure \ref{figure3}
& 1.8 & 0.001:0.001:0.08  &   500 \\
 \hline
Figure \ref{figure4}
& 1.5 & 0.001:0.001:0.08  &   500 \\
 \hline
Figure \ref{figure5}
& 1.2 & 0.01:0.001:0.08  &  3000 \\
 \hline
\end{tabular}
\label{Tab1}
\end{table}

\begin{figure}
\centering
\includegraphics[height=8cm,width=14cm]{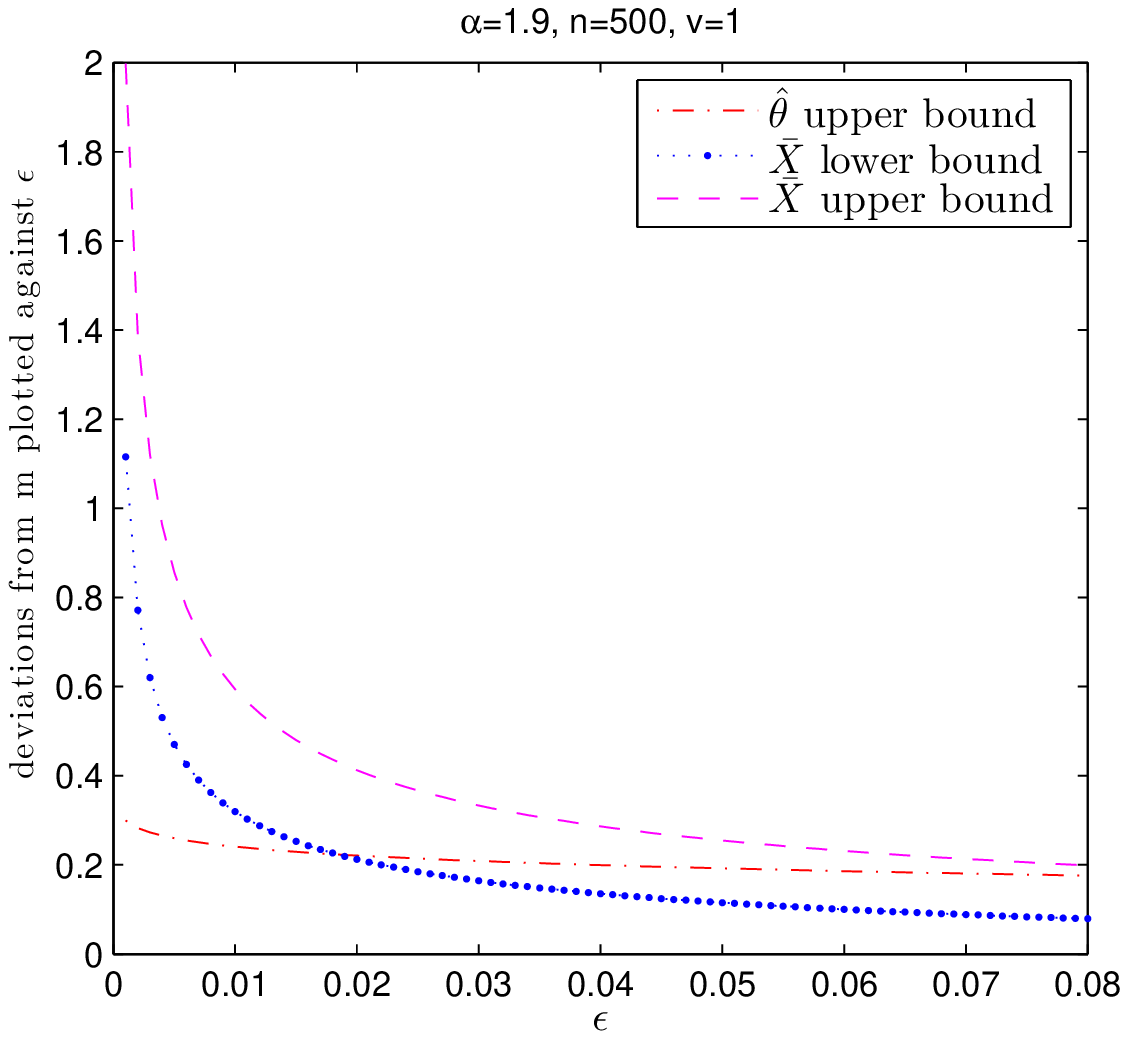}
\caption{Deviations of $\hat{\theta}$ from the sample mean, compared with those of empirical mean}
\label{figure2}
\end{figure}

\begin{figure}
\centering
\includegraphics[height=8cm,width=14cm]{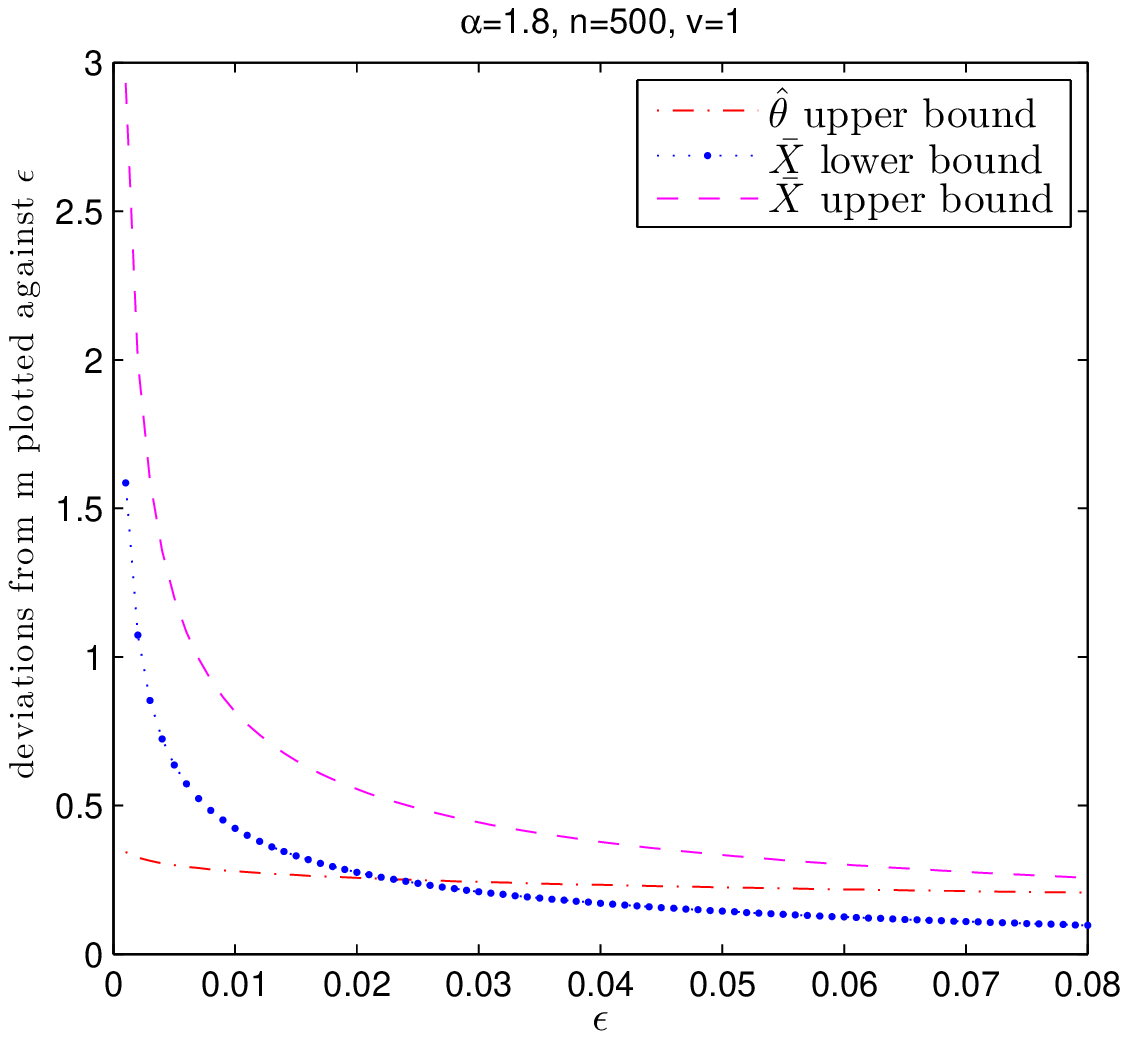}
\caption{Deviations of $\hat{\theta}$ from the sample mean, compared with those of empirical mean}
\label{figure3}
\end{figure}

\begin{figure}
\centering
\includegraphics[height=8cm,width=14cm]{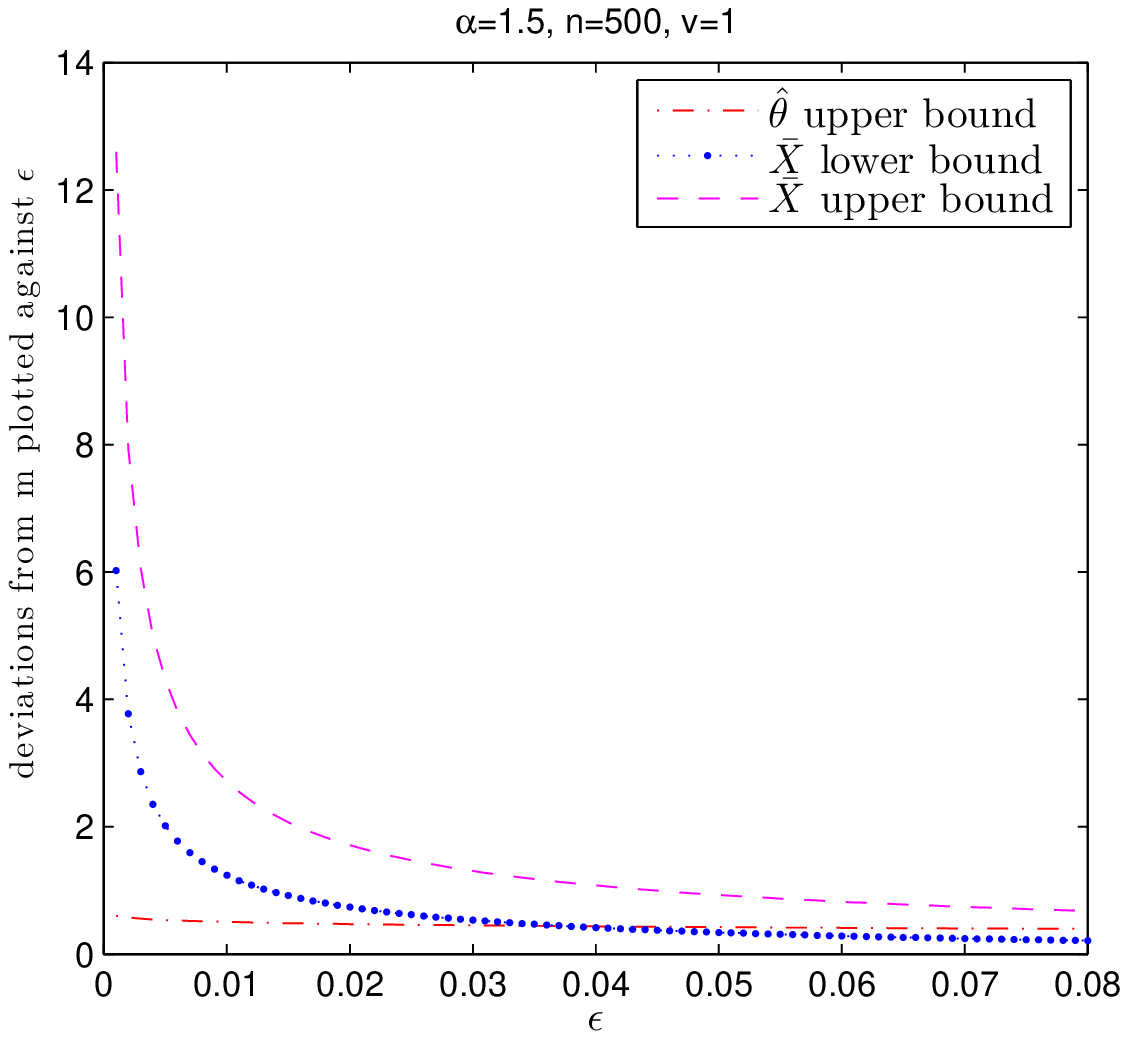}
\caption{Deviations of $\hat{\theta}$ from the sample mean, compared with those of empirical mean}
\label{figure4}
\end{figure}

\begin{figure}
\centering
\includegraphics[height=8cm,width=14cm]{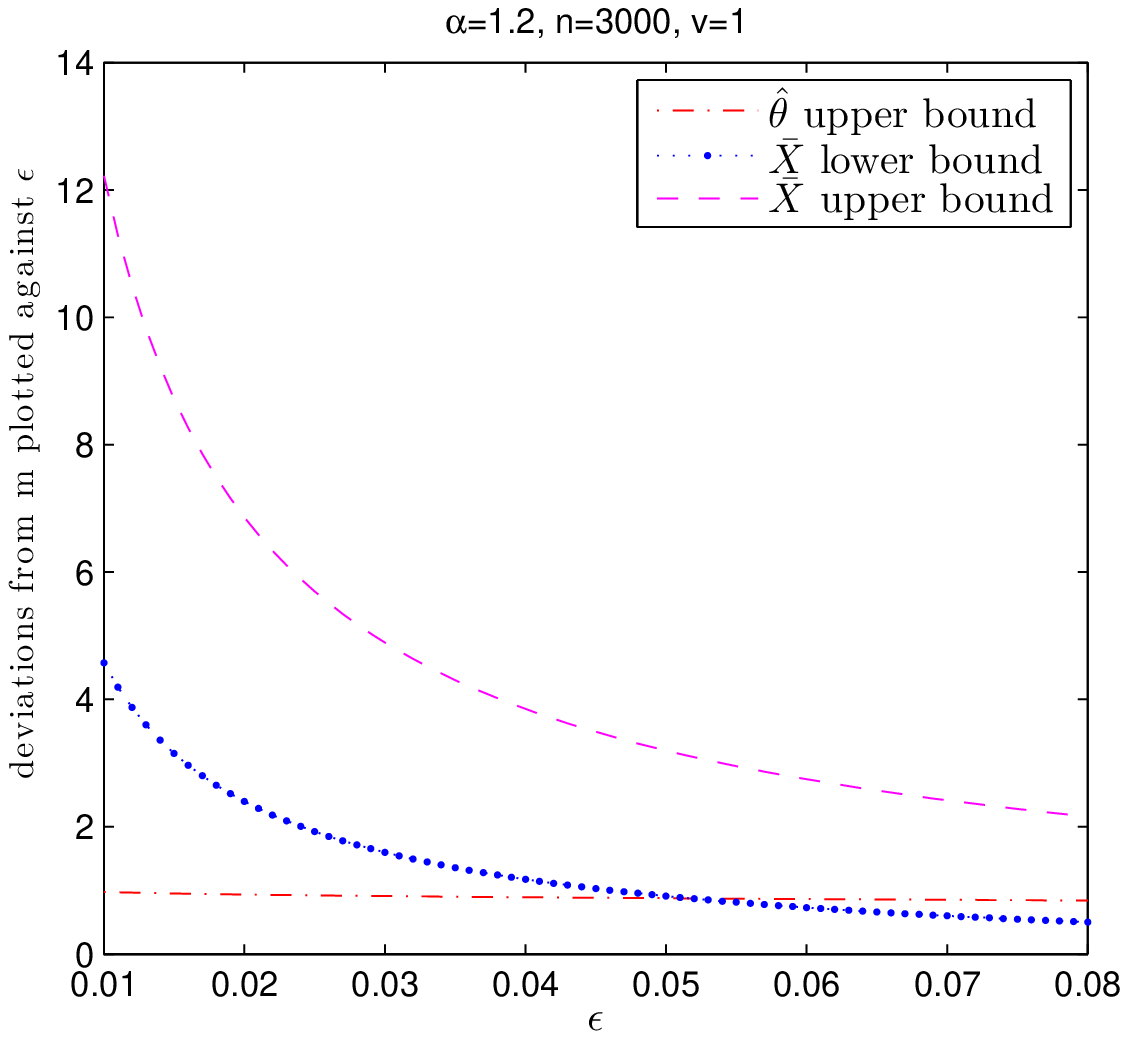}
\caption{Deviations of $\hat{\theta}$ from the sample mean, compared with those of empirical mean}
\label{figure5}
\end{figure}

\section{The deviation {upper and lower} bounds of the empirical mean estimator}\label{empirical}
At the end of the previous section, we compare our ${\rm M}$-estimator $\hat \theta$ with the empirical mean $\bar X$. In this section, we will prove the upper bounds and the corresponding lower bounds on the empirical mean used in these comparisons.

\subsection{Upper bounds}

\begin{lemma}\label{empu}
Let $(X_{i})_{i=1}^{n}$ be a sequence of random variables independently drawn from some distribution $\mathbf{\Pi}$ with mean $m$ and $\alpha$-th central moment $v$. Then, denote the empirical mean ${\bar{X}=\frac{1}{n}\sum_{i=1}^{n}X_{i},}$ we have
\begin{align*}
{
\mathbb{P}\Big(|\bar{X}-m|\geq\big(\frac{v}{\epsilon n^{\alpha-1}}\big)^{\frac{1}{\alpha}}\Big)\leq2\epsilon.}
\end{align*}
\end{lemma}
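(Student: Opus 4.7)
The plan is to combine Markov's inequality with a sharp $\alpha$-th moment bound for the centered sum $S_n := \sum_{i=1}^n (X_i-m)$. Writing $t := (v/(\epsilon n^{\alpha-1}))^{1/\alpha}$, I would start from
\begin{align*}
\mathbb{P}\big(|\bar X - m|\geq t\big) \;=\; \mathbb{P}\big(|S_n|^\alpha\geq (nt)^\alpha\big) \;\leq\; \frac{\mathbb{E}|S_n|^\alpha}{(nt)^\alpha},
\end{align*}
so that the claim reduces to the moment estimate $\mathbb{E}|S_n|^\alpha \leq 2nv$: plugging this in gives $\mathbb{P}(|\bar X - m|\geq t) \leq 2v/(n^{\alpha-1}t^\alpha) = 2\epsilon$ by the choice of $t$.

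The core step is therefore a von Bahr--Esseen type moment bound: for independent mean-zero random variables $Y_i$ with $\mathbb{E}|Y_i|^\alpha<\infty$ and $\alpha\in(1,2]$,
\begin{align*}
\mathbb{E}\Big|\sum_{i=1}^n Y_i\Big|^\alpha \;\leq\; 2\sum_{i=1}^n \mathbb{E}|Y_i|^\alpha.
\end{align*}
I would establish this by induction on $n$ based on a one-sided Taylor-type expansion of $x\mapsto|x|^\alpha$, namely the pointwise inequality
\begin{align*}
|a+b|^\alpha \;\leq\; |a|^\alpha + \alpha\,\mathrm{sgn}(a)\,|a|^{\alpha-1} b + 2|b|^\alpha \qquad (a,b\in\mathbb{R},\ \alpha\in(1,2]).
\end{align*}
Applied with $a=S_{n-1}$, $b=Y_n$, taking expectations and using the independence of $S_{n-1}$ and $Y_n$ together with $\mathbb{E}[Y_n]=0$, the linear middle term drops out and one obtains $\mathbb{E}|S_n|^\alpha\leq \mathbb{E}|S_{n-1}|^\alpha + 2\mathbb{E}|Y_n|^\alpha$, which closes the induction with $Y_i = X_i - m$ and $\mathbb{E}|Y_i|^\alpha = v$.

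The main obstacle is the sharp pointwise inequality above, as the crude $C_r$-inequality (\ref{CR}) already used in the paper only yields $|a+b|^\alpha\leq 2^{\alpha-1}(|a|^\alpha+|b|^\alpha)$; iterated this produces a factor $n^{\alpha-1}$ rather than the correct factor $n$ in the moment bound, and would destroy the $n^{-(\alpha-1)/\alpha}$ rate in the threshold $t$. The essential feature of the improved inequality is that its linear term has zero expectation under the centering hypothesis, so it is precisely $\mathbb{E}[Y_i]=0$ that is being exploited to gain the missing factor $n^{\alpha-1}$.
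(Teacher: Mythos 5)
Your argument is correct and matches the paper's proof in its essential structure: both reduce the claim via Markov's inequality to the moment bound $\mathbb{E}\big|\sum_{i=1}^{n}(X_i-m)\big|^{\alpha}\le 2nv$ and then observe that the chosen threshold makes the resulting bound equal to $2\epsilon$. The only difference is that the paper obtains this moment bound by citing \cite[Theorem 2]{V-E} (von Bahr--Esseen), whereas you reprove it by induction from the pointwise inequality $|a+b|^{\alpha}\le|a|^{\alpha}+\alpha\,\mathrm{sgn}(a)|a|^{\alpha-1}b+2|b|^{\alpha}$, which is indeed valid for $\alpha\in(1,2]$ and correctly exploits the centering to kill the linear term.
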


\begin{proof}
Noticing that $(X_{i}-m)_{i=1}^{n}$ are i.i.d. random variables with mean zero, by \cite[Theorem 2]{V-E}, we have
\begin{align*}
\mathbb{E}\big|\sum_{i=1}^{n}[X_{i}-m]\big|^{\alpha}\leq2\sum_{i=1}^{n}\mathbb{E}|X_{i}-m|^{\alpha}=2nv,
\end{align*}
which implies
\begin{align*}
{
\mathbb{P}\Big(|\bar{X}-m|\geq\big(\frac{v}{\epsilon n^{\alpha-1}}\big)^{\alpha}\Big)\leq\frac{\mathbb{E}|\bar{X}-m|^{\alpha}}{\frac{v}{\epsilon n^{\alpha-1}}}
\leq\frac{\frac{1}{n^{\alpha}}\mathbb{E}\big|\sum_{i=1}^{n}[X_{i}-m]\big|^{\alpha}}{\frac{v}{\epsilon n^{\alpha-1}}}\leq2\epsilon,}
\end{align*}
the desired result follows.
\end{proof}

\subsection{Lower bounds}
Comparing with Lemma \ref{empu}, the following lemma gives a lower bound for the deviations of the empirical mean for some specific distributions.

\begin{lemma}\label{emp}
For any value of the $\alpha$-th central moment $v,$ any deviation $\eta>0,$ there is some distribution $\mathbf{\Pi}$ with mean zero and $\alpha$-th {central} moment $v$ such that
\begin{align}\label{lower1}
\mathbb{P}(\bar{X}\geq\eta)=\mathbb{P}(\bar{X}\leq-\eta)\geq\frac{v}{3n^{\alpha-1}\eta^{\alpha}}\big(1-\frac{v}{n^{\alpha}\eta^{\alpha}}\big)^{n-1},
\end{align}
where $\bar{X}=\frac{1}{n}\sum_{i=1}^{n}X_{i}$ with $(X_{i})_{i=1}^{n}$ independently drawn from the distribution $\mathbf{\Pi}.$ Furthermore, if
{
\begin{align}\label{en}
\epsilon<(3e)^{-1} {\text \ \ and \ \ }
n\geq2,
\end{align}
}
the inequality
\begin{align*}
|\bar{X}-m|\geq\big(\frac{v}{3n^{\alpha-1}\epsilon}\big)^{\frac{1}{\alpha}}(1-\frac{3e\epsilon}{n})^{\frac{n-1}{\alpha}}
\end{align*}
holds with probability at least $2\epsilon.$
\end{lemma}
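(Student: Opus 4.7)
The plan is to exhibit an explicit symmetric three-point distribution realising the lower bound. Taking $m=0$ without loss of generality (the general case follows by translation), I consider the law $\mathbf{\Pi}$ on $\{-n\eta,0,n\eta\}$ with $\mathbb{P}(X_{1}=n\eta)=\mathbb{P}(X_{1}=-n\eta)=p$ and $\mathbb{P}(X_{1}=0)=1-2p$, and pick $p=\frac{v}{2n^{\alpha}\eta^{\alpha}}$ so that $\mathbb{E}[X_{1}]=0$ and $\mathbb{E}|X_{1}|^{\alpha}=2p(n\eta)^{\alpha}=v$. Symmetry of $\mathbf{\Pi}$ gives $\mathbb{P}(\bar{X}\geq\eta)=\mathbb{P}(\bar{X}\leq-\eta)$, so it suffices to lower bound the first probability. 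For that I would restrict to the event $A=\{\text{exactly one }X_{i}\text{ equals }n\eta\text{ and all others equal }0\}$; on $A$ one has $\bar{X}=\eta$, hence
\begin{align*}
\mathbb{P}(\bar{X}\geq\eta)\ \geq\ \mathbb{P}(A)\ =\ np(1-2p)^{n-1}\ =\ \frac{v}{2n^{\alpha-1}\eta^{\alpha}}\left(1-\frac{v}{n^{\alpha}\eta^{\alpha}}\right)^{n-1},
\end{align*}
which is strictly stronger than, and therefore implies, the claimed bound \eqref{lower1} (with $\tfrac{1}{3}$ replacing $\tfrac{1}{2}$, leaving a convenient slack for the second half of the lemma).

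For the second assertion I would take
\begin{align*}
\eta=\left(\frac{v}{3n^{\alpha-1}\epsilon}\right)^{1/\alpha}\left(1-\frac{3e\epsilon}{n}\right)^{(n-1)/\alpha}
\end{align*}
and plug it back into \eqref{lower1}. A direct computation yields
\begin{align*}
\frac{v}{3n^{\alpha-1}\eta^{\alpha}}=\frac{\epsilon}{(1-3e\epsilon/n)^{n-1}},\qquad \frac{v}{n^{\alpha}\eta^{\alpha}}=\frac{3\epsilon}{n(1-3e\epsilon/n)^{n-1}},
\end{align*}
so the whole argument reduces to establishing the single inequality $(1-3e\epsilon/n)^{n-1}\geq e^{-1}$: from it one reads off $\frac{v}{n^{\alpha}\eta^{\alpha}}\leq\frac{3e\epsilon}{n}$, hence $\left(1-\tfrac{v}{n^{\alpha}\eta^{\alpha}}\right)^{n-1}\geq\left(1-\tfrac{3e\epsilon}{n}\right)^{n-1}$, and substituting back into \eqref{lower1} gives $\mathbb{P}(\bar{X}\geq\eta)\geq\epsilon$. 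By the symmetry observed above $\mathbb{P}(\bar{X}\leq-\eta)\geq\epsilon$, and since these two events are disjoint I would conclude $\mathbb{P}(|\bar{X}-m|\geq\eta)\geq 2\epsilon$.

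The main obstacle is the innocuous-looking inequality $(1-\tfrac{3e\epsilon}{n})^{n-1}\geq e^{-1}$, because the definition of $\eta$ is self-referential: the factor $(1-3e\epsilon/n)^{n-1}$ appears both in $\eta$ itself and, through $\tfrac{v}{n^{\alpha}\eta^{\alpha}}$, inside the lower bound one is trying to match. My plan for this step is to take logarithms and use the elementary bound $\ln(1-x)\geq-\tfrac{x}{1-x}$ for $x\in[0,1)$, which gives
\begin{align*}
(n-1)\ln\left(1-\frac{3e\epsilon}{n}\right)\ \geq\ -\frac{3e\epsilon(n-1)}{n-3e\epsilon},
\end{align*}
and the right-hand side is $\geq-1$ precisely when $3e\epsilon\cdot n\leq n$, i.e.\ when $\epsilon\leq(3e)^{-1}$. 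This is exactly one of the hypotheses in \eqref{en}, while $n\geq 2$ is what keeps the exponent $n-1\geq 1$ meaningful; the $1/e$ cushion supplied here is precisely what allows the self-referential construction to close.
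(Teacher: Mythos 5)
Your proof is correct, and its probabilistic skeleton is the same as the paper's: atoms at $\pm n\eta$, a lower bound via the ``exactly one spike, the rest zero'' event, the same self-referential choice of $\eta$, and the reduction of the second claim to $(1-\frac{3e\epsilon}{n})^{n-1}\geq e^{-1}$. You diverge in two places. First, the paper's $\mathbf{\Pi}$ is not a pure three-point law: it puts mass $\frac{v}{3n^{\alpha}\eta^{\alpha}}$ on each atom $\pm n\eta$ and spends the remaining third of the $\alpha$-th moment on symmetric Pareto tails of index $\gamma\in(\alpha,2)$, which is where the constant $\frac{1}{3}$ in \eqref{lower1} comes from. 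Your three-point law with $p=\frac{v}{2n^{\alpha}\eta^{\alpha}}$ is admissible for the lemma as stated and even gives the stronger constant $\frac{1}{2}$; what you give up is only the contextually relevant (but unstated) feature that the paper's example has infinite variance, i.e.\ lies genuinely outside Catoni's original finite-variance setting. Note that both constructions tacitly need $\eta\geq v^{1/\alpha}/n$ so that the mass at $0$ is nonnegative; the paper leaves this implicit too, and it holds automatically for the specific $\eta$ used in the second half since there $\frac{v}{n^{\alpha}\eta^{\alpha}}\leq\frac{3e\epsilon}{n}<1$. Second, for the inequality $(1-\frac{3e\epsilon}{n})^{n-1}\geq e^{-1}$ the paper first uses $\epsilon<(3e)^{-1}$ to compare with $(1-\frac{1}{n})^{n-1}$ and then shows by a derivative computation that $x\mapsto(1-\frac{1}{x})^{x-1}$ decreases to $e^{-1}$; your route via $\log(1-x)\geq-\frac{x}{1-x}$ is shorter and makes the role of the threshold $(3e)^{-1}$ completely transparent. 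Both are sound; yours is the more economical.
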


\begin{proof}
Let us consider the random variable $X,$ which has the following distribution: 
{
\begin{align*}
\mathbb{P}(X=0)=1-\frac{v}{n^{\alpha}\eta^{\alpha}}, \quad \mathbb{P}(X=n\eta)=\mathbb{P}(X=-n\eta)=\frac{v}{3n^{\alpha}\eta^{\alpha}}
\end{align*}
and
\begin{align*}
\mathbb{P}\big(X\in(x,\infty)\backslash\{n\eta\}\big)=\frac{q}{2\gamma}x^{-\gamma}, \quad x\in(p,\infty)\backslash\{n\eta\}
\end{align*}
\begin{align*}
\mathbb{P}\big(X\in(-\infty,x)\backslash\{-n\eta\}\big)=\frac{q}{2\gamma}|x|^{-\gamma}, \quad x\in(-\infty,-p)\backslash\{-n\eta\},
\end{align*}
where $\gamma\in(\alpha,2),$ $p=\big(\frac{\gamma-\alpha}{\gamma}\big)^{\frac{1}{\alpha}}n\eta$ and $q=\frac{\gamma v}{3}\big(\frac{\gamma-\alpha}{\gamma}\big)^{\frac{\gamma}{\alpha}}(n\eta)^{\gamma-\alpha}.$ It is easy to check that $\mathbb{E}X=0$ and
\begin{align*}
\mathbb{E}|X|^{\alpha}=(n\eta)^{\alpha}\frac{v}{3n^{\alpha}\eta^{\alpha}}+{ (n\eta)}^{\alpha}\frac{v}{3n^{\alpha}\eta^{\alpha}}+\frac{q}{\gamma-\alpha}p^{\alpha-\gamma}=\frac{v}{3}+\frac{v}{3}+\frac{v}{3}=v.
\end{align*}
}

Let $(X_{i})_{i=1}^{n}$ be i.i.d., which have the same distribution as $X.$ Then,
\begin{align*}
\mathbb{P}(\bar{X}\geq\eta)
=\mathbb{P}(\bar{X}\leq-\eta)
\geq\mathbb{P}(\bar{X}=\eta)
\geq \frac{v}{3n^{\alpha-1}\eta^{\alpha}}\big(1-\frac{v}{n^{\alpha}\eta^{\alpha}}\big)^{n-1},
\end{align*}
so (\ref{lower1}) is proved.

Take $\eta=\big(\frac{v}{3n^{\alpha-1}\epsilon}\big)^{\frac{1}{\alpha}}(1-\frac{3e\epsilon}{n})^{\frac{n-1}{\alpha}},$ we have
\begin{align*}
\frac{v}{3n^{\alpha-1}\eta^{\alpha}}\big(1-\frac{v}{n^{\alpha}\eta^{\alpha}}\big)^{n-1}=\epsilon(1-\frac{3e\epsilon}{n})^{-(n-1)}\Big(1-\frac{3\epsilon}{n(1-\frac{3e\epsilon}{n})^{n-1}}\Big)^{n-1}.
\end{align*}
If $\epsilon<(3e)^{-1},$ then $(1-\frac{3e\epsilon}{x})^{x-1}\geq(1-\frac{1}{x})^{x-1}.$ For any $x\geq1,$ we denote $f(x)=(1-\frac{1}{x})^{x-1},$ then
\begin{align*}
f'(x)=&(1-\frac{1}{x})^{x-1}\Big(\log(1-\frac{1}{x})+\frac{(x-1)}{x^{2}(1-\frac{1}{x})}\Big)\\
=&(1-\frac{1}{x})^{x-1}\Big(\log(1-\frac{1}{x})+\frac{1}{x}\Big).
\end{align*}
Noting that $(1-\frac{1}{x})^{x-1}>0,$ let $g(x)=\log(1-\frac{1}{x})+\frac{1}{x}$ for $x\geq1,$ then we have
\begin{align*}
g'(x)=\frac{1}{x^{2}(1-\frac{1}{x})}-\frac{1}{x^{2}}=\frac{1}{x^{2}(x-1)}>0
\end{align*}
and
\begin{align*}
\lim_{x\rightarrow\infty}g(x)=\lim_{x\rightarrow\infty}\big[\log(1-\frac{1}{x})+\frac{1}{x}\big]=0,
\end{align*}
which imply $g(x)\leq0,$ so we have $f'(x)\leq0$ for $x\geq1.$ What's more, we have
\begin{align*}
\lim_{x\rightarrow\infty}f(x)=\lim_{x\rightarrow\infty}(1-\frac{1}{x})^{x-1}=e^{-1},
\end{align*}
which implies $(1-\frac{3e\epsilon}{n})^{n-1}\geq e^{-1}.$ Therefore, we have
\begin{align*}
\frac{v}{3n^{\alpha-1}\eta^{\alpha}}\big(1-\frac{v}{n^{\alpha}\eta^{\alpha}}\big)^{n-1}\geq\epsilon(1-\frac{3e\epsilon}{n})^{-(n-1)}\Big(1-\frac{3e\epsilon}{n}\Big)^{n-1}=\epsilon.
\end{align*}
The proof is complete.
\end{proof}

\section{$\ell_{1}$-regression for heavy-tailed samples having finite $\alpha$-th moment with $\alpha \in (1,2)$}\label{l1regression}

 The linear regression considered in \cite{Z-Z} aims to find the unknown minimizer $\theta^{*}$ of the following minimization problem:
\begin{align}\label{min}
\min_{\mathbf{\theta}\in\mathbf{\Theta}} R_{\ell_{1}}(\theta) \ \ \ \ {\rm with} \ \ \ R_{\ell_{1}}(\theta)=\mathbb{E}_{(\mathbf{x},y)\sim{\bf\Pi}}\big[|\mathbf{x}^{T}\mathbf{\theta}-y|\big],
\end{align}
where ${\bf\Pi}$ is the population's distribution, and {$\mathbf{\Theta} \subseteq \R^d$} is the set in which $\theta^{*}$ is located. In practice, ${\bf\Pi}$ is not known, one usually draws a data set $\mathcal{T}=(\mathbf{x}_{1},y_{1}),\cdots,(\mathbf{x}_{n},y_{n})$ from $\mathbf{\Pi}$ and consider the following empirical optimization problem:
\begin{align*}
\min_{\mathbf{\theta}\in\mathbf{\Theta}}\widehat{R}_{\ell_{1}}(\mathbf{\theta}) \ \ \ {\rm with} \ \ \ \widehat{R}_{\ell_{1}}(\mathbf{\theta})=\frac{1}{n}\sum_{i=1}^{n}|\mathbf{x}_{i}^{T}\mathbf{\theta}-y_{i}|.
\end{align*}

Inspired by Catoni's work, Zhang et al. \cite{Z-Z} considered the case that ${\bf\Pi}$ is heavy tailed with finite variance and proposed a new minimization problem
\begin{align}\label{mintrun}
\min_{\mathbf{\theta}\in\mathbf{\Theta}} \widehat R_{\varphi, \ell_{1}}(\theta) \ \ \ {\rm with} \ \ \ \widehat R_{\varphi, \ell_{1}}(\theta)=\frac{1}{n\beta}\sum_{i=1}^{n}\varphi(\beta|y_{i}-\mathbf{x}_{i}^{T}\theta|),
\end{align}
where $\varphi$ is the same as that in \cite{C12} and $\beta>0$ is to be determined later.

Thanks to the analysis of Section \ref{deviation}, we extend the results in \cite{Z-Z} to the case in which samples can have finite $\alpha$-th moment with $\alpha \in (1,2)$, the approach is by replacing the original $\varphi$ with \eqref{trunc}.

\subsection{Main results of this section}

Before stating the main results, we first give some definitions and assumptions.


\begin{definition} Let $(\mathbf{\Theta},d)$ be a metric space, and $\mathbf{K}$ be a subset of $\mathbf{\Theta}$. Then a subset $\mathcal{N}\subseteq\mathbf{K}$ is called an {$\e$-net} of $\mathbf{K}$ if for every $\theta\in\mathbf{K}$, we can find a $\tilde{\theta}\in\mathcal{N}$ such that $d(\theta,\tilde{\theta})\leq\e$. The covering number is the minimal cardinality of the $\e$-net of $\mathbf{\Theta}$ and denoted by $N(\bf{\Theta},\e)$.
\end{definition}

We shall assume:

{\bf {Assumption A1}} (i) The domain $\mathbf{\Theta}$ is totally bounded, that is, for any $\e>0$, there exists a finite $\e$-net of $\mathbf{\Theta}.$

(ii) The expectation of the $\alpha$-th moment of $\mathbf{x}$ is bounded, that is,
\begin{align*}
\mathbb{E}_{(\mathbf{x,y})\sim\mathbf{\Pi}}[|\mathbf{x}|^{\alpha}]<\infty.
\end{align*}

(iii) The $\ell_{\alpha}$-risk of all $\theta\in\mathbf{\Theta}$ is bounded, that is,
\begin{align*}
\sup_{\theta\in\mathbf{\Theta}}R_{\ell_{\alpha}}(\theta)=\sup_{\theta\in\mathbf{\Theta}}\mathbb{E}_{(\mathbf{x},y)\sim\mathbf{\Pi}}[|y-\mathbf{x}^{T}\theta|^{\alpha}]<\infty.
\end{align*}

Then, we can state the second theorem, which will be proved in subsection \ref{pre}.

\begin{theorem}\label{main2}
Let $\theta^{*}$ and $\hat{\theta}$ be the minimizers of (\ref{min}) and (\ref{mintrun}), respectively. Under {\bf Assumption A1}, for any $\delta\in(0,\frac{1}{2}),$ with probability at least $1-2\delta$, we have
\begin{align*}
&R_{\ell_{1}}(\hat{\theta})-R_{\ell_{1}}(\theta^{*})\\
\leq&2\e\mathbb{E}|\mathbf{x}_{1}| +\big(\frac{2^{\alpha-1}\e^{\alpha}}{\alpha} \mathbb{E}|\mathbf{x}_{1}|^{\alpha} +\frac{2^{\alpha-1}+1}{\alpha} \sup_{\theta\in\bf{\Theta}}R_{\ell_{\alpha}}(\theta) \big)\beta^{\alpha-1}+\frac{1}{n\beta} \log\frac{N(\bf{\Theta},\e)}
{\delta^{2}}
\end{align*}
for any $\e>0.$ Furthermore, let
\begin{align*}
\beta=\big(\frac{1}{n}\log\frac{N(\mathbf{\Theta},\e)}{\delta^{2}}\big)^{\frac{1}{\alpha}},
\end{align*}
we have
\begin{align}\label{end}
&R_{\ell_{1}}(\hat{\theta})-R_{\ell_{1}}(\theta^{*})\nonumber\\
\leq&2\e\mathbb{E}|\mathbf{x}_{1}| +\big(\frac{2^{\alpha-1}\e^{\alpha}}{\alpha} \mathbb{E}|\mathbf{x}_{1}|^{\alpha}+\frac{2^{\alpha-1}+1}{\alpha}\sup_{\theta\in\bf{\Theta}}R_{\ell_{\alpha}}(\theta)+1\big)
\big(\frac{1}{n}\log\frac{N({\bf \Theta},\e)}{\delta^{2}}\big)^{\frac{\alpha-1}{\alpha}}.
\end{align}
\end{theorem}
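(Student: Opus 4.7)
The plan is to adapt the philosophy of the ${\rm M}$-estimator analysis in Section~\ref{deviation} by adding two ingredients that account for the continuous parameter $\theta \in \mathbf{\Theta}$: a minimal $\e$-net $\mathcal{N}\subseteq \mathbf{\Theta}$ of cardinality $N(\mathbf{\Theta},\e)$, and the triangle inequality $|y - \mathbf{x}^{T}\tilde\theta| \le |y - \mathbf{x}^{T}\theta| + \e|\mathbf{x}|$ valid whenever $d(\theta,\tilde\theta) \le \e$. I will write $\tilde\theta^{*}\in\mathcal{N}$ for a net point within $\e$ of the deterministic $\theta^{*}$, and $\tilde\theta_{h}\in\mathcal{N}$ for a net point within $\e$ of the data-dependent $\hat\theta$.

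First, I would derive two exponential-moment estimates mirroring \eqref{ineq1}-\eqref{ineq2}. The upper bound in \eqref{trunc} applied to $\varphi(\beta|y-\mathbf{x}^{T}\tilde\theta^{*}|)$, together with the above triangle inequality and the $C_{r}$-inequality \eqref{CR} on the $\alpha$-th power, yields the ``smeared upper'' bound
\[
\mathbb{E}\Bigl[\exp\bigl(n\beta\,\widehat{R}_{\varphi,\ell_{1}}(\tilde\theta^{*})\bigr)\Bigr] \le \exp\Bigl(n\beta R_{\ell_{1}}(\theta^{*}) + n\beta\e\,\mathbb{E}|\mathbf{x}_{1}| + \tfrac{2^{\alpha-1}n\beta^{\alpha}}{\alpha}\bigl(R_{\ell_{\alpha}}(\theta^{*}) + \e^{\alpha}\mathbb{E}|\mathbf{x}_{1}|^{\alpha}\bigr)\Bigr),
\]
while the lower bound in \eqref{trunc} applied at every $\tilde\theta \in \mathcal{N}$ (without smearing) gives the direct estimate
\[
\mathbb{E}\bigl[\exp(-n\beta\,\widehat{R}_{\varphi,\ell_{1}}(\tilde\theta))\bigr] \le \exp\bigl(-n\beta R_{\ell_{1}}(\tilde\theta) + n\beta^{\alpha}R_{\ell_{\alpha}}(\tilde\theta)/\alpha\bigr).
\]
I would then invoke the Markov inequality pointwise on the first bound with failure probability $\delta$, and on the second at each $\tilde\theta \in \mathcal{N}$ with failure probability $\delta/N(\mathbf{\Theta},\e)$ followed by a union bound over $\mathcal{N}$. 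The joint event then has probability at least $1-2\delta$, and the two logarithmic contributions combine as $\log(\delta^{-1}) + \log(N(\mathbf{\Theta},\e)/\delta) = \log(N(\mathbf{\Theta},\e)/\delta^{2})$.

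The final step chains these estimates together via the optimality $\widehat{R}_{\varphi,\ell_{1}}(\hat\theta) \le \widehat{R}_{\varphi,\ell_{1}}(\tilde\theta^{*})$ and the triangle inequality $R_{\ell_{1}}(\hat\theta) \le R_{\ell_{1}}(\tilde\theta_{h}) + \e\mathbb{E}|\mathbf{x}_{1}|$, then upper-bounds every $R_{\ell_{\alpha}}(\cdot)$ by $\sup_{\theta \in \mathbf{\Theta}} R_{\ell_{\alpha}}(\theta)$. Two uses of the triangle inequality (one inside the smearing at $\tilde\theta^{*}$, one passing from $R_{\ell_{1}}(\hat\theta)$ to $R_{\ell_{1}}(\tilde\theta_{h})$) produce the $2\e\mathbb{E}|\mathbf{x}_{1}|$ term; the combined coefficient $(2^{\alpha-1}+1)/\alpha$ on $\sup R_{\ell_{\alpha}}$ arises as the sum $2^{\alpha-1}/\alpha+1/\alpha$ of the smeared-upper and direct-lower contributions; and the residual $(2^{\alpha-1}\e^{\alpha}/\alpha)\mathbb{E}|\mathbf{x}_{1}|^{\alpha}\beta^{\alpha-1}$ comes from the $C_{r}$ expansion of the smearing. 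Optimizing in $\beta$ by taking $\beta = (n^{-1}\log(N(\mathbf{\Theta},\e)/\delta^{2}))^{1/\alpha}$ balances the $\beta^{\alpha-1}$ and $(n\beta)^{-1}$ terms and delivers \eqref{end}. The hard part I anticipate is bridging $\widehat{R}_{\varphi,\ell_{1}}(\tilde\theta_{h})$ and $\widehat{R}_{\varphi,\ell_{1}}(\tilde\theta^{*})$ once the minimizer property is invoked: since $\varphi$ fails to be globally Lipschitz for $\alpha<2$, I expect this step to require an auxiliary estimate of the form $\varphi(a+b)-\varphi(a) \le b + (2^{\alpha-1}-1)a^{\alpha}/\alpha + 2^{\alpha-1}b^{\alpha}/\alpha$ (again obtained from \eqref{trunc} and \eqref{CR}), or, more cleanly, a careful choice of $\tilde\theta_{h}$ as the minimizer of $\widehat{R}_{\varphi,\ell_{1}}$ over $\mathcal{N}$ so that the required inequality becomes automatic by optimality on the finite set.
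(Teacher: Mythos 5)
Your architecture (an $\e$-net, a union bound on one side, a single-point Markov bound on the other, and the bookkeeping that produces $2\e\mathbb{E}|\mathbf{x}_1|$, the coefficient $(2^{\alpha-1}+1)/\alpha$, and $\log(N/\delta^2)$) matches the paper's, but there is a genuine gap at exactly the step you flag as ``the hard part'', and neither of your two proposed repairs closes it. Your first repair --- a pointwise estimate of the form $\varphi(a+b)-\varphi(a)\le b+c_1 a^{\alpha}+c_2 b^{\alpha}$ applied to bridge $\widehat{R}_{\varphi,\ell_1}(\tilde\theta_h)$ and $\widehat{R}_{\varphi,\ell_1}(\hat\theta)$ --- produces, after summing over $i$, the \emph{empirical} quantities $\frac{1}{n}\sum_i|\mathbf{x}_i|$ and $\frac{1}{n}\sum_i|y_i-\mathbf{x}_i^{T}\hat\theta|^{\alpha}$. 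These are heavy-tailed random variables evaluated at a data-dependent point; controlling them with failure probability $\delta$ costs a factor $1/\delta$ (Markov) rather than $\log(1/\delta)$, which destroys the claimed bound --- indeed the inability of raw empirical $\alpha$-th moments to concentrate is the reason the truncation $\varphi$ is introduced in the first place. Your second repair --- taking $\tilde\theta_h$ to be the minimizer of $\widehat{R}_{\varphi,\ell_1}$ over the net so that $\widehat{R}_{\varphi,\ell_1}(\tilde\theta_h)\le\widehat{R}_{\varphi,\ell_1}(\tilde\theta^{*})$ holds by fiat --- severs the other link in your chain: that $\tilde\theta_h$ need no longer lie within $\e$ of $\hat\theta$, so $R_{\ell_1}(\hat\theta)\le R_{\ell_1}(\tilde\theta_h)+\e\mathbb{E}|\mathbf{x}_1|$ fails, and you end up bounding the risk of a different (net-restricted) estimator than the one in the statement.

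The missing idea is to put the $\e$-perturbation \emph{inside} the argument of $\varphi$ on the union-bound side, rather than trying to compare $\widehat{R}_{\varphi,\ell_1}$ at two different points. Since $|y_i-\mathbf{x}_i^{T}\hat\theta|\ge|y_i-\mathbf{x}_i^{T}\tilde\theta|-\e|\mathbf{x}_i|$ for the net point $\tilde\theta$ nearest $\hat\theta$, and $\varphi$ is non-decreasing, one has the \emph{deterministic} inequality
\begin{align*}
\widehat{R}_{\varphi,\ell_1}(\hat\theta)\;\ge\;\frac{1}{n\beta}\sum_{i=1}^{n}\varphi\big(\beta|y_i-\mathbf{x}_i^{T}\tilde\theta|-\beta\e|\mathbf{x}_i|\big),
\end{align*}
and the exponential-moment/Markov/union-bound machinery is then applied to this \emph{smeared} functional uniformly over the net (this is where the ${\rm C}_r$-inequality produces the $2^{\alpha-1}\e^{\alpha}\mathbb{E}|\mathbf{x}_1|^{\alpha}$ term and both occurrences of $\e\mathbb{E}|\mathbf{x}_1|$). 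No Lipschitz property of $\varphi$ and no comparison between $\widehat{R}_{\varphi,\ell_1}(\tilde\theta_h)$ and $\widehat{R}_{\varphi,\ell_1}(\hat\theta)$ is ever needed; the optimality of $\hat\theta$ is used only once, to kill the middle term in the decomposition $R_{\ell_1}(\hat\theta)-R_{\ell_1}(\theta^{*})=(R_{\ell_1}(\hat\theta)-\widehat{R}_{\varphi,\ell_1}(\hat\theta))+(\widehat{R}_{\varphi,\ell_1}(\hat\theta)-\widehat{R}_{\varphi,\ell_1}(\theta^{*}))+(\widehat{R}_{\varphi,\ell_1}(\theta^{*})-R_{\ell_1}(\theta^{*}))$. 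Correspondingly, your smearing of the upper-tail bound at $\tilde\theta^{*}$ is unnecessary: $\theta^{*}$ is deterministic, so a single-point bound at $\theta^{*}$ itself suffices there.
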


{ In order to compute the covering number, we further assume:}

{\bf Assumption A2} The domain ${\bf{\Theta}}\subseteq\mathbb{R}^{d}$, and its radius is bounded by a constant $r$, that is,  
\begin{align*}
|\theta|\leq r ,\quad \forall\theta\in{\bf{\Theta}}.
\end{align*}

Then, we have the following corollary, which will be proved in subsection \ref{pre}.

\begin{corollary}\label{explicit}
Keep the same notations and assumptions in Theorem \ref{main2}. In addition, we suppose the {\bf Assumption A2} holds. Then, for any $\delta\in(0,\frac{1}{2}),$ with probability at least $1-2\delta$, we have
\begin{align*}
&R_{\ell_{1}}(\hat{\theta})-R_{\ell_{1}}(\theta^{*})\\
\leq&\frac{2}{n}\mathbb{E}|\mathbf{x}_{1}|+\big(\frac{2^{\alpha-1}}{\alpha n^{\alpha}} \mathbb{E}|\mathbf{x}_{1}|^{\alpha}+\frac{2^{\alpha-1}+1}{\alpha}\sup_{\theta\in\bf{\Theta}}R_{\ell_{\alpha}}(\theta)+1\big)
\Big(\frac{1}{n}\big(d\log(6nr)+\log\frac{1}{\delta^{2}}\big)\Big)^{\frac{\alpha-1}{\alpha}}\\
=&O\Big(\big(\frac{d\log n}{n}\big)^{\frac{\alpha-1}{\alpha}}\Big).
\end{align*}
\end{corollary}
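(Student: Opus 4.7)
The plan is to specialize Theorem \ref{main2} by choosing the net parameter $\e = 1/n$ and combining this with a standard volumetric covering-number bound for Euclidean balls.

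First, I would set $\e = 1/n$ in the bound \eqref{end}. This substitution is essentially free of work: it immediately converts $2\e\mathbb{E}|\mathbf{x}_{1}|$ into $\frac{2}{n}\mathbb{E}|\mathbf{x}_{1}|$ and $\frac{2^{\alpha-1}\e^{\alpha}}{\alpha}\mathbb{E}|\mathbf{x}_{1}|^{\alpha}$ into $\frac{2^{\alpha-1}}{\alpha n^{\alpha}}\mathbb{E}|\mathbf{x}_{1}|^{\alpha}$, exactly matching the first two (non-covering-number) terms appearing in the corollary's bound.

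Next, I would bound the covering number of $\mathbf{\Theta}$. Under Assumption A2, $\mathbf{\Theta}$ is contained in the Euclidean ball of radius $r$ in $\mathbb{R}^{d}$, and a standard volumetric comparison argument yields $N(\mathbf{\Theta},\e)\leq(1+2r/\e)^{d}$. Taking $\e=1/n$ and absorbing the additive constant into a slightly larger multiplicative constant gives $N(\mathbf{\Theta},1/n)\leq(6nr)^{d}$ (valid at least in the regime $nr\ge 1$, which is consistent with the asymptotic statement), and hence $\log\big(N(\mathbf{\Theta},1/n)/\delta^{2}\big)\leq d\log(6nr) + \log(1/\delta^{2})$. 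Substituting this bound into the last factor on the right-hand side of \eqref{end} produces exactly the explicit estimate claimed in the corollary.

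Finally, the asymptotic rate $O\big((d\log n / n)^{(\alpha-1)/\alpha}\big)$ is read off by observing that the $\frac{2}{n}\mathbb{E}|\mathbf{x}_{1}|$ term is of lower order $O(1/n)$ than $\big((d\log n)/n\big)^{(\alpha-1)/\alpha}$, while the prefactor involving $\alpha$, $\mathbb{E}|\mathbf{x}_{1}|^{\alpha}$, $\sup_{\theta\in\mathbf{\Theta}}R_{\ell_{\alpha}}(\theta)$, $r$ and $\delta$ is a bounded constant in $n$. I do not expect a genuine obstacle here, since the entire argument is a direct substitution into Theorem \ref{main2}; the only care needed is in tracking the covering-number constants and ensuring the regime $\e\le r$ is compatible with the large-$n$ statement.
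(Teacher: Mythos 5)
Your proposal is correct and matches the paper's own proof: both substitute $\e=1/n$ into \eqref{end} and bound $\log N(\mathbf{\Theta},1/n)\leq d\log(6nr)$ by a volumetric covering estimate for the ball $B_r\supseteq\mathbf{\Theta}$ (the paper routes this through $N(\mathbf{\Theta},\e)\leq N(B_r,\e/2)$ and Vershynin's Corollary 4.2.13, while you invoke the volumetric bound directly, which is the same idea). No gaps.
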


\subsection{Proof of Theorem \ref{main2} and Corollary \ref{explicit}}\label{pre}

Before proving the Theorem \ref{main2}, we first give the following auxiliary lemmas.

\begin{lemma}\label{first}
Keep the same notations and assumptions as in Theorem \ref{main2}. Then, the following inequality
\begin{align*}
\widehat{R}_{\varphi,\ell_{1}}(\theta^{*})-R_{\ell_{1}}(\theta^{*})\leq\frac{\beta^{\alpha-1}}{\alpha}R_{\ell_{\alpha}}(\theta^{*})+\frac{1}{n \beta}\log\frac{1}{\delta}
\end{align*}
holds with probability at least $1-\delta$.
\end{lemma}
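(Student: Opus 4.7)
The plan is to mimic the Chernoff-type argument already used to establish inequality \eqref{ineq1} in the earlier lemma, but now applied to the non-negative random variables $|y_i-\mathbf{x}_i^T\theta^{*}|$ rather than to the centred variables $X_i-\theta$. Because these random variables are non-negative, only the upper half of the influence-function sandwich \eqref{trunc} is needed, namely $\varphi(x)\le \log(1+x+x^{\alpha}/\alpha)$ for $x\ge 0$.

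First I would compute the moment generating function of a single summand. Writing $Z_{i}=\varphi\bigl(\beta|y_{i}-\mathbf{x}_{i}^{T}\theta^{*}|\bigr)$, the upper bound in \eqref{trunc} together with $\mathrm{e}^{\log u}=u$ gives
\begin{align*}
\mathbb{E}\bigl[\mathrm{e}^{Z_{1}}\bigr]
\le 1+\beta\,\mathbb{E}|y_{1}-\mathbf{x}_{1}^{T}\theta^{*}|+\frac{\beta^{\alpha}}{\alpha}\mathbb{E}|y_{1}-\mathbf{x}_{1}^{T}\theta^{*}|^{\alpha}
=1+\beta R_{\ell_{1}}(\theta^{*})+\frac{\beta^{\alpha}}{\alpha}R_{\ell_{\alpha}}(\theta^{*}).
\end{align*}
Then I would use the i.i.d.\ assumption on $\mathcal{T}$ to factor the joint MGF, and apply the elementary inequality $1+x\le \mathrm{e}^{x}$ to obtain
\begin{align*}
\mathbb{E}\Bigl[\exp\Bigl(\sum_{i=1}^{n}Z_{i}\Bigr)\Bigr]
\le \exp\Bigl(n\beta R_{\ell_{1}}(\theta^{*})+\frac{n\beta^{\alpha}}{\alpha}R_{\ell_{\alpha}}(\theta^{*})\Bigr).
\end{align*}

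Finally I would apply Markov's inequality to the non-negative random variable $\exp\bigl(\sum_{i}Z_{i}\bigr)$: for any $t\in\mathbb{R}$,
\begin{align*}
\mathbb{P}\Bigl(\sum_{i=1}^{n}Z_{i}\ge t\Bigr)\le \mathrm{e}^{-t}\,\mathbb{E}\Bigl[\exp\Bigl(\sum_{i=1}^{n}Z_{i}\Bigr)\Bigr].
\end{align*}
Choosing $t=n\beta R_{\ell_{1}}(\theta^{*})+\tfrac{n\beta^{\alpha}}{\alpha}R_{\ell_{\alpha}}(\theta^{*})+\log(1/\delta)$ makes the right-hand side equal to $\delta$. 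Dividing the resulting event through by $n\beta$ rewrites the left-hand side as $\widehat{R}_{\varphi,\ell_{1}}(\theta^{*})-R_{\ell_{1}}(\theta^{*})\ge \tfrac{\beta^{\alpha-1}}{\alpha}R_{\ell_{\alpha}}(\theta^{*})+\tfrac{1}{n\beta}\log(1/\delta)$, and taking complements yields the claim.

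There is no real obstacle here; the argument is a direct one-sided Chernoff bound that parallels the derivation of \eqref{ineq1}. The only point worth care is ensuring that the $\log$-bound in \eqref{trunc} is applied on the appropriate sign branch, which is automatic since $\beta|y_{i}-\mathbf{x}_{i}^{T}\theta^{*}|\ge 0$, so that the $x\ge 0$ branch of $\varphi$ with the $+$ sign applies throughout.
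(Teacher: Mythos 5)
Your proposal is correct and follows essentially the same route as the paper's own proof: both bound the exponential moment of $\sum_i\varphi(\beta|y_i-\mathbf{x}_i^{T}\theta^{*}|)$ via the upper half of the sandwich \eqref{trunc}, factor by independence, apply $1+x\leq e^{x}$, and finish with Markov's inequality at the same threshold. No gaps.
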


\begin{proof}
Noticing that $(\mathbf{x}_{i},y_{i}),$ $i=1,\cdots,n,$ are i.i.d., by (\ref{trunc}), we have
\begin{align*}
\mathbb{E}\big[\exp\big(n\beta \widehat{R}_{\varphi,\ell_{1}}(\theta^{*})\big)\big]=&\mathbb{E}\big[\exp\big(\sum_{i=1}^{n}\varphi(\beta|y_{i}-{\bf x}_{i}^{T}\theta^{*}|)\big)\big]\\
=&\Big[\mathbb{E}\big[\exp\big(\varphi(\beta|y_{1}-{\bf x}_{1}^{T}\theta^{*}|)\big)\big]\Big]^{n}\\
\leq&\Big[\mathbb{E}\big[1+\beta|y_{1}-{\bf x}_{1}^{T}\theta^{*}|+\frac{\beta^{\alpha}|y_{1}-{\bf x}_{1}^{T}\theta^{*}|^{\alpha}}{\alpha}\big]\Big]^{n},
\end{align*}
then, by the inequality $1+x\leq e^{x}$ for all $x\in \R$, we have
\begin{align*}
\mathbb{E}\big[\exp\big(n\beta \widehat{R}_{\varphi,\ell_{1}}(\theta^{*})\big)\big]
\leq&\big[1+\beta R_{\ell_{1}}(\theta^{*})+\frac{\beta^{\alpha}}{\alpha}R_{\ell_{\alpha}}(\theta^{*})\big] ^{n}\\
\leq& \exp\big(n\beta R_{\ell_{1}}(\theta^{*})+\frac{n\beta^{\alpha}}{\alpha}R_{\ell_{\alpha}}(\theta^{*})\big).
\end{align*}
Therefore, by Markov inequality, we have
\begin{align*}
&\mathbb{P}\Big(n\beta \widehat{R}_{\varphi,\ell_{1}}(\theta^{*})\geq n\beta R_{\ell_{1}}(\hat{\theta})+\frac{n\beta^{\alpha}}{\alpha}R_{\ell_{\alpha}}(\theta^{*})+\log\frac{1}{\delta}\Big)\\
=&\mathbb{P}\Big(\exp\big(n\beta \widehat{R}_{\varphi,\ell_{1}}(\theta^{*})\big)\geq \exp\big(n\beta R_{\ell_{1}}(\theta^{*})+\frac{n\beta^{\alpha}}{\alpha}R_{\ell_{\alpha}}(\theta^{*})+\log\frac{1}{\delta}\big)\Big)\\
\leq&\frac{\mathbb{E}\big[\exp\big(n\beta \widehat{R}_{\varphi,\ell_{1}}(\theta^{*})\big)\big]}{\exp\big(n\beta R_{\ell_{1}}(\theta^{*})+\frac{n\beta^{\alpha}}{\alpha}R_{\ell_{\alpha}}(\theta^{*})+\log\frac{1}{\delta}\big)}\leq\delta.
\end{align*}
The proof is complete.
\end{proof}

\begin{lemma}\label{pre2}
For any $\e>0,$ {let $\mathcal{N}({\bf\Theta},\e)$ be an $\e$-net of ${\bf\Theta}$ with cardinality $N({\bf\Theta},\e).$} Then, with probability at least $1-\delta$, the following inequality
\begin{align*}
&-\frac{1}{n \beta}\sum_{i=1}^{n}\varphi\big(\beta|y_{i}-\mathbf{x}_{i}^{T}\tilde{\theta}|-\beta\e|\mathbf{x}_{i}|\big)\\
&\leq-R_{\ell_{1}}(\tilde{\theta})+\e\mathbb{E}|\mathbf{x}_{1}|+\frac{(2\beta)^{\alpha-1}}{\alpha}\sup_{\theta\in{\bf\Theta}}R_{\ell_{\alpha}}(\theta)
+\frac{(2\beta)^{\alpha-1}\e^{\alpha}}{\alpha}\mathbb{E}|\mathbf{x}_{1}|^{\alpha}+\frac{1}{n\beta}\log\frac{N({\bf\Theta},\e)}{\delta}
\end{align*}
holds for all $\tilde{\theta}\in\mathcal{N}({\bf\Theta},\e).$ 
\end{lemma}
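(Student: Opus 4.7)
The plan is to mimic the Chernoff-style proof of Lemma \ref{highprobability} for each fixed point of the $\e$-net and then close the argument with a union bound over $\mathcal{N}(\mathbf{\Theta},\e)$.

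First I would fix $\tilde\theta \in \mathcal{N}(\mathbf{\Theta},\e)$ and exploit the lower bound in \eqref{trunc} in its exponentiated form: $e^{-\varphi(x)} \le 1 - x + |x|^{\alpha}/\alpha$ for every $x \in \R$. Applying this with $x_{i}(\tilde\theta) := \beta|y_{i}-\mathbf{x}_{i}^{T}\tilde\theta| - \beta\e|\mathbf{x}_{i}|$ and using that the samples $(\mathbf{x}_{i},y_{i})$ are i.i.d., I obtain
\begin{align*}
\mathbb{E}\Bigl[\exp\Bigl(-\sum_{i=1}^{n}\varphi\bigl(x_{i}(\tilde\theta)\bigr)\Bigr)\Bigr]
\le \Bigl(\mathbb{E}\Bigl[1 - x_{1}(\tilde\theta) + \tfrac{|x_{1}(\tilde\theta)|^{\alpha}}{\alpha}\Bigr]\Bigr)^{n}.
\end{align*}
The $C_{r}$-inequality \eqref{CR} (applied to $a=\beta|y_{1}-\mathbf{x}_{1}^{T}\tilde\theta|$ and $b=-\beta\e|\mathbf{x}_{1}|$) gives $|x_{1}(\tilde\theta)|^{\alpha} \le 2^{\alpha-1}\beta^{\alpha}|y_{1}-\mathbf{x}_{1}^{T}\tilde\theta|^{\alpha} + 2^{\alpha-1}\beta^{\alpha}\e^{\alpha}|\mathbf{x}_{1}|^{\alpha}$, whence taking expectations produces the four natural terms $R_{\ell_{1}}(\tilde\theta)$, $\e\mathbb{E}|\mathbf{x}_{1}|$, $R_{\ell_{\alpha}}(\tilde\theta)$ and $\e^{\alpha}\mathbb{E}|\mathbf{x}_{1}|^{\alpha}$. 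An application of $1+x\le e^{x}$ then yields, after replacing $R_{\ell_{\alpha}}(\tilde\theta)$ by $\sup_{\theta\in\mathbf{\Theta}}R_{\ell_{\alpha}}(\theta)$, a clean exponential bound
\begin{align*}
\mathbb{E}\Bigl[\exp\Bigl(-\sum_{i=1}^{n}\varphi(x_{i}(\tilde\theta))\Bigr)\Bigr] \le \exp\Bigl(n\beta\bigl[-R_{\ell_{1}}(\tilde\theta) + \e\mathbb{E}|\mathbf{x}_{1}| + \tfrac{(2\beta)^{\alpha-1}}{\alpha}\sup_{\theta\in\mathbf{\Theta}}R_{\ell_{\alpha}}(\theta) + \tfrac{(2\beta)^{\alpha-1}\e^{\alpha}}{\alpha}\mathbb{E}|\mathbf{x}_{1}|^{\alpha}\bigr]\Bigr).
\end{align*}

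Next I would apply Markov's inequality at level $\delta/N(\mathbf{\Theta},\e)$: for this particular $\tilde\theta$ the event that $-\frac{1}{n\beta}\sum_{i}\varphi(x_{i}(\tilde\theta))$ exceeds the stated right-hand side of Lemma \ref{pre2} has probability at most $\delta/N(\mathbf{\Theta},\e)$, because the extra term $\frac{1}{n\beta}\log\frac{N(\mathbf{\Theta},\e)}{\delta}$ in the bound exactly absorbs the factor $\log\frac{N(\mathbf{\Theta},\e)}{\delta}$ needed in the Chernoff exponent. A union bound over the $N(\mathbf{\Theta},\e)$ points of $\mathcal{N}(\mathbf{\Theta},\e)$ then gives the desired uniform statement with probability at least $1-\delta$.

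I do not expect any real obstacle here; the only thing to be careful about is that $x_{1}(\tilde\theta)$ can take negative values, so one must apply the $\varphi$-inequality \eqref{trunc} in its full range rather than only on $\R_{+}$, and one must correctly track the constant $2^{\alpha-1}$ coming from \eqref{CR} and rewrite $\frac{2^{\alpha-1}\beta^{\alpha}}{\alpha}$ as $\frac{(2\beta)^{\alpha-1}\beta}{\alpha}$ so that a factor $\beta$ cancels with the normalization $\frac{1}{n\beta}$ when dividing both sides of the Chernoff inequality by $n\beta$ at the end. Once these bookkeeping points are observed, the proof parallels that of Lemma \ref{highprobability} almost line by line, only with $R_{\ell_{1}}$, $R_{\ell_{\alpha}}$ and $\e\mathbb{E}|\mathbf{x}_{1}|$ replacing $m$ and $v$.
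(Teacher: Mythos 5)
Your proposal is correct and follows essentially the same route as the paper's own proof: exponentiate the lower bound in \eqref{trunc}, use independence to reduce to a single-sample expectation, split $\big||y_{1}-\mathbf{x}_{1}^{T}\tilde{\theta}|-\e|\mathbf{x}_{1}|\big|^{\alpha}$ via the $C_{r}$-inequality \eqref{CR}, apply $1+x\le e^{x}$, then Markov at level $\delta'=\delta/N(\mathbf{\Theta},\e)$ and a union bound over the net. The bookkeeping points you flag (the sign of the argument of $\varphi$ and the identity $\frac{2^{\alpha-1}\beta^{\alpha}}{\alpha}=\frac{(2\beta)^{\alpha-1}\beta}{\alpha}$) are exactly the ones the paper's computation relies on.
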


\begin{proof}
For a fixed $\tilde{\theta}\in\mathcal{N}({\bf\Theta},\e),$ noticing that $(\mathbf{x}_{i},y_{i}),$ $i=1,\cdots,n,$ are i.i.d., by (\ref{trunc}), we have
\begin{align*}
&\mathbb{E}\big[\exp\big(-\sum_{i=1}^{n} \varphi(\beta|y_{i}-\mathbf{x}_{i}^{T} \tilde{\theta}|-\beta\e|\mathbf{x}_{i}|)\big)\big]\\
=&\Big[\mathbb{E}\big[\exp\big(-\varphi (\beta|y_{1}-\mathbf{x}_{1}^{T} \tilde{\theta}|-\beta\e|\mathbf{x}_{1}|)\big)\big]\Big]^{n}\\
\leq&\Big[\mathbb{E}\big[1-\beta|y_{1}- \mathbf{x}_{1}^{T}\tilde{\theta}| +\beta\e|\mathbf{x}_{1}|+\frac{\beta^{\alpha} (|y_{1}-\mathbf{x}_{1}^{T} \tilde{\theta}|-\e|\mathbf{x}_{1}|)^{\alpha}}{\alpha} \big]\Big]^{n}\\
=&\Big[1-\beta R_{\ell_{1}}(\tilde{\theta})+\beta\e \mathbb{E}|\mathbf{x}_{1}| +\frac{\beta^{\alpha}}{\alpha} \mathbb{E}\big[\big||y_{1}-\mathbf{x}_{1}^{T} \tilde{\theta}|-\e|\mathbf{x}_{1}|\big |^{\alpha}\big]\Big]^{n},
\end{align*}
then, by (\ref{CR}) and the inequality $1+x\leq e^{x}$ for all $x\in \R$, we have
\begin{align*}
&\mathbb{E}\big[\exp\big(-\sum_{i=1}^{n} \varphi(\beta|y_{i}-\mathbf{x}_{i}^{T} \tilde{\theta}|-\beta\e|\mathbf{x}_{i}|)\big)\big]\\
\leq&\Big[1-\beta R_{\ell_{1}}(\tilde{\theta})+\beta\e\mathbb{E} |\mathbf{x}_{1}|+\frac{\beta^{\alpha}2^{\alpha-1}}{\alpha}R_{\ell_{\alpha}}(\tilde{\theta})
+\frac{\beta^{\alpha}\e^{\alpha}2^{\alpha-1}}{\alpha}\mathbb{E}|\mathbf{x}_{1}|^{\alpha}]\Big]^{n}\\
\leq&\exp\Big[n\big(-\beta R_{\ell_{1}}(\tilde{\theta})+\beta\e\mathbb{E}|\mathbf{x}_{1}|+\frac{\beta^{\alpha}2^{\alpha-1}}{\alpha}R_{\ell_{\alpha}}(\tilde{\theta})
+\frac{\beta^{\alpha}\e^{\alpha}2^{\alpha-1}}{\alpha}\mathbb{E}|\mathbf{x}_{1}|^{\alpha}]\big)\Big].
\end{align*}
Therefore, by Markov inequality, we have
\begin{align*}
&\mathbb{P}\Big(-\sum_{i=1}^{n} \varphi(\beta|y_{i}-\mathbf{x}_{i}^{T} \tilde{\theta}|-\beta\e|\mathbf{x}_{i}|)\\
&\qquad\geq n\big(-\beta R_{\ell_{1}}(\tilde{\theta})+\beta\e\mathbb{E} |\mathbf{x}_{1}|+\frac{\beta^{\alpha}2^{\alpha-1}} {\alpha}R_{\ell_{\alpha}}(\tilde{\theta})
+\frac{\beta^{\alpha}\e^{\alpha}2^{\alpha-1}}{\alpha} \mathbb{E}|\mathbf{x}_{1}|^{\alpha}]\big) +\log\frac{1}{\delta'}\Big)\\
\leq&\frac{\mathbb{E}\big[\exp\big(-\sum_{i=1}^{n} \varphi(\beta|y_{i}-\mathbf{x}_{i}^{T}\tilde{\theta}| -\beta\e|\mathbf{x}_{i}|)\big)\big]} {\exp\Big[n\big(-\beta R_{\ell_{1}}(\tilde{\theta})+\beta\e\mathbb{E} |\mathbf{x}_{1}|+\frac{\beta^{\alpha}2^{\alpha-1}} {\alpha}R_{\ell_{\alpha}}(\tilde{\theta})
+\frac{\beta^{\alpha}\e^{\alpha}2^{\alpha-1}}{\alpha} \mathbb{E}|\mathbf{x}_{1}|^{\alpha}]\big) +\log\frac{1}{\delta'}\Big]}\leq\delta',
\end{align*}
where $\delta'\in(0,1)$, which will be chosen later. Hence, for a fixed $\tilde{\theta}\in\mathcal{N}({\bf\Theta},\e),$ with probability at most $\delta',$ we have
\begin{align*}
&-\frac{1}{n\beta}\sum_{i=1}^{n}\varphi(\beta|y_{i}-\mathbf{x}_{i}^{T}\tilde{\theta}|-\beta\e|\mathbf{x}_{i}|)\\
\geq&-R_{\ell_{1}}(\tilde{\theta})+\e\mathbb{E}|\mathbf{x}_{1}|+\frac{(2\beta)^{\alpha-1}}{\alpha}R_{\ell_{\alpha}}(\tilde{\theta})
+\frac{(2\beta)^{\alpha-1}\e^{\alpha}}{\alpha}\mathbb{E}|\mathbf{x}_{1}|^{\alpha}]+\frac{1}{n\beta}\log\frac{1}{\delta'}.
\end{align*}
Therefore, since the set $\mathcal{N}({\bf\Theta},\e)$ has $N({\bf\Theta},\e)$ elements, we have
\begin{align*}
&\mathbb{P}\Big(\bigcap_{\tilde{\theta}\in \mathcal{N}({\bf\Theta},\e)}\big\{-\frac{1}{n\beta}\sum_{i=1}^{n} \varphi(\beta|y_{i}-\mathbf{x}_{i}^{T}\tilde{\theta}| -\beta\e|\mathbf{x}_{i}|)\leq\\
&\qquad\qquad\quad-R_{\ell_{1}}(\tilde{\theta})+\e\mathbb{E} |\mathbf{x}_{1}|+\frac{(2\beta)^{\alpha-1}} {\alpha}R_{\ell_{\alpha}}(\tilde{\theta})
+\frac{(2\beta)^{\alpha-1}\e^{\alpha}}{\alpha} \mathbb{E}|\mathbf{x}_{1}|^{\alpha}]\big)+\frac{1}{n\beta}\log\frac{1}{\delta'}\big\}\Big)\\
\geq&1-N({\bf\Theta},\e)\delta'.
\end{align*}
Finally, taking $\delta'=\frac{\delta}{N({\bf\Theta},\e)},$ with probability at least $1-\delta,$ the following inequality
\begin{align*}
&-\frac{1}{n\beta}\sum_{i=1}^{n} \varphi(\beta|y_{i}-\mathbf{x}_{i}^{T}\tilde{\theta}| -\beta\e|\mathbf{x}_{i}|)\\
\leq&-R_{\ell_{1}}(\tilde{\theta}) +\e\mathbb{E}|\mathbf{x}_{1}| +\frac{(2\beta)^{\alpha-1}}{\alpha}R_{\ell_{\alpha}} (\tilde{\theta})
+\frac{(2\beta)^{\alpha-1}\e^{\alpha}}{\alpha} \mathbb{E}|\mathbf{x}_{1}|^{\alpha}] +\frac{1}{n\beta}\log\frac{N({\bf\Theta},\e)} {\delta}\\
\leq&-R_{\ell_{1}}(\tilde{\theta})+\e\mathbb{E} |\mathbf{x}_{1}|+\frac{(2\beta)^{\alpha-1}}{\alpha} \sup_{\theta\in{\bf\Theta}}R_{\ell_{\alpha}}(\theta)
+\frac{(2\beta)^{\alpha-1}\e^{\alpha}}{\alpha} \mathbb{E}|\mathbf{x}_{1}|^{\alpha} +\frac{1}{n\beta}\log\frac{N({\bf\Theta},\e)}{\delta}
\end{align*}
holds for all $\tilde{\theta}\in\mathcal{N}({\bf\Theta},\e).$ The proof is complete.
\end{proof}

Based on Lemma \ref{pre2}, we have the following lemma.

\begin{lemma}\label{second}
Keep the same notations and assumptions as in Theorem \ref{main2}. Then, for any $\e>0,$ the following inequality
\begin{align*}
&R_{\ell_{1}}(\hat{\theta})-\widehat{R}_{\varphi,\ell_{1}}(\hat{\theta})\\
\leq&2\e\mathbb{E}|\mathbf{x}_{1}|+\frac{(2\beta)^{\alpha-1}}{\alpha}\sup _{\theta\in{\bf \Theta}}R_{\ell_{\alpha}}(\theta)+\frac{(2\beta)^{\alpha-1}\e^{\alpha}}{\alpha} \mathbb{E}|\mathbf{x}_{1}|^{\alpha}+\frac{1}{n\beta} \log \frac{N({\bf\Theta},\e)}{\delta}
\end{align*}
holds with probability at least $1-\delta.$
\end{lemma}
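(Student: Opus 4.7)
The plan is to reduce the deviation $R_{\ell_{1}}(\hat\theta)-\widehat R_{\varphi,\ell_{1}}(\hat\theta)$ to a statement about a fixed point $\tilde\theta$ in an $\e$-net and then apply Lemma~\ref{pre2}, absorbing the discretisation error of the net into the stated bound. The estimator $\hat\theta$ is random, but we only need to locate it to precision $\e$ in $\mathbf{\Theta}$: pick $\tilde\theta\in\mathcal N(\mathbf{\Theta},\e)$ with $|\hat\theta-\tilde\theta|\le\e$, which exists by definition of the net.

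By Cauchy--Schwarz, $|\mathbf{x}_{i}^{T}(\hat\theta-\tilde\theta)|\le\e|\mathbf{x}_{i}|$, and the reverse triangle inequality gives
\begin{equation*}
\beta|y_{i}-\mathbf{x}_{i}^{T}\tilde\theta|-\beta\e|\mathbf{x}_{i}|\;\le\;\beta|y_{i}-\mathbf{x}_{i}^{T}\hat\theta|.
\end{equation*}
Because $\varphi$ is non-decreasing, applying $\varphi$ and summing shows
\begin{equation*}
-\widehat R_{\varphi,\ell_{1}}(\hat\theta)\;\le\;-\frac{1}{n\beta}\sum_{i=1}^{n}\varphi\bigl(\beta|y_{i}-\mathbf{x}_{i}^{T}\tilde\theta|-\beta\e|\mathbf{x}_{i}|\bigr).
\end{equation*}
This is the key step: it has traded the random argument $\hat\theta$ for a member of the (deterministic) net at the cost of a slack $\e|\mathbf{x}_{i}|$ inside $\varphi$, precisely the form handled by Lemma~\ref{pre2}.

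Next, apply Lemma~\ref{pre2}. On the event of probability at least $1-\delta$ on which its conclusion holds simultaneously for every element of $\mathcal N(\mathbf{\Theta},\e)$ (which is why that lemma was formulated uniformly, via a union bound of size $N(\mathbf{\Theta},\e)$), the displayed right-hand side is bounded by
\begin{equation*}
-R_{\ell_{1}}(\tilde\theta)+\e\mathbb{E}|\mathbf{x}_{1}|+\frac{(2\beta)^{\alpha-1}}{\alpha}\sup_{\theta\in\mathbf{\Theta}}R_{\ell_{\alpha}}(\theta)+\frac{(2\beta)^{\alpha-1}\e^{\alpha}}{\alpha}\mathbb{E}|\mathbf{x}_{1}|^{\alpha}+\frac{1}{n\beta}\log\frac{N(\mathbf{\Theta},\e)}{\delta}.
\end{equation*}

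It remains to replace $R_{\ell_{1}}(\tilde\theta)$ by $R_{\ell_{1}}(\hat\theta)$. By the $\ell_{1}$ triangle inequality applied inside the expectation,
\begin{equation*}
R_{\ell_{1}}(\hat\theta)-R_{\ell_{1}}(\tilde\theta)\;\le\;\mathbb{E}\bigl|\mathbf{x}_{1}^{T}(\hat\theta-\tilde\theta)\bigr|\;\le\;\e\,\mathbb{E}|\mathbf{x}_{1}|,
\end{equation*}
which supplies the second copy of $\e\mathbb{E}|\mathbf{x}_{1}|$ needed to reach the total $2\e\mathbb{E}|\mathbf{x}_{1}|$. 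Adding $R_{\ell_{1}}(\hat\theta)$ to both sides of the previous inequality, then combining with this last bound, yields the lemma. The only substantive subtlety is that $\tilde\theta$ depends on the data, so Lemma~\ref{pre2} must be invoked in its uniform-in-$\tilde\theta$ form; beyond that, the proof is a straightforward chain of monotonicity, triangle, and reverse-triangle inequalities.
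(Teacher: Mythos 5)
Your proposal is correct and follows the paper's own argument essentially step for step: discretise $\hat\theta$ to a net point $\tilde\theta$, use monotonicity of $\varphi$ with the reverse triangle inequality to pass to the shifted argument $\beta|y_i-\mathbf{x}_i^T\tilde\theta|-\beta\e|\mathbf{x}_i|$, invoke Lemma~\ref{pre2} uniformly over the net, and recover $R_{\ell_1}(\hat\theta)$ at the cost of a second $\e\,\mathbb{E}|\mathbf{x}_1|$. Your explicit remark that the data-dependence of $\tilde\theta$ forces the uniform (union-bound) form of Lemma~\ref{pre2} is exactly the point the paper relies on.
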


\begin{proof}
Since $\hat{\theta}\in\mathbf{\Theta},$ there exists a $\tilde{\theta}\in\mathcal{N}({\bf\Theta},\e)$ such that
\begin{align*}
|\hat{\theta}-\tilde{\theta}|\leq\e,
\end{align*}
which implies
\begin{align}\label{l1}
|y_{i}-\mathbf{x}_{i}^{T}\hat{\theta}|\geq|y_{i}-\mathbf{x}_{i}^{T}\tilde{\theta}|-|\mathbf{x}_{i}^{T}(\tilde{\theta}-\hat{\theta})|\geq|y_{i}-\mathbf{x}_{i}^{T} \tilde{\theta}|-\e|\mathbf{x}_{i}|.
\end{align}
Then, since $\varphi(\cdot)$ is non-decreasing, we have
\begin{align*}
\widehat{R}_{\varphi,\ell_{1}}(\hat{\theta})=\frac{1}{n \beta}\sum_{i=1}^{n}\varphi\big(\beta|y_{i}-\mathbf{x}_{i}^{T}\hat{\theta}|\big)\ge\frac{1}{n\beta}\sum_{i=1}^{n} \varphi\big(\beta|y_{i}-\mathbf{x}_{i}^{T}\tilde{\theta}|-\beta\e|\mathbf{x}_{i}|\big),
\end{align*}
by Lemma \ref{pre2}, with probability at least $1-\delta$, we have
\begin{align*}
&\widehat{R}_{\varphi,\ell_{1}}(\hat{\theta})\\
	\ge&R_{\ell_{1}}(\tilde{\theta})-\big[\e\mathbb{E}|\mathbf{x}_{1}|+\frac{(2\beta)^{\alpha-1}}{\alpha} \sup _{\theta \in{\bf\Theta}} R_{\ell_{\alpha}}(\theta)+\frac{(2\beta)^{\alpha-1}\e^{\alpha}}{\alpha} \mathbb{E}|\mathbf{x}_{1}|^{\alpha}+\frac{1}{n\beta} \log \frac{N({\bf\Theta},\e)}{\delta}\big].
\end{align*}
What's more, by (\ref{l1}) and triangle inequality, we have
\begin{align*}
R_{\ell_{1}}(\hat{\theta})-R_{\ell_{1}}(\tilde{\theta})=&\mathbb{E}\big[|\mathbf{x}_{1}^{T}\hat{\theta}-y_{1}|-|\mathbf{x}_{1}^{T}\tilde{\theta}-y_{1}|\big]
\leq\mathbb{E}\big[|\mathbf{x}_{1}^{T}\hat{\theta}-\mathbf{x}_{1}^{T}\tilde{\theta}|\big]\leq\e\mathbb{E}|\mathbf{x}_{1}|,
\end{align*}
which further implies that with probability at least $1-\delta$, the inequality
\begin{align*}
&\widehat{R}_{\varphi,\ell_{1}}(\hat{\theta})\\
	\ge&R_{\ell_{1}}(\hat{\theta})-\big[2\e\mathbb{E}|\mathbf{x}_{1}|+\frac{(2\beta)^{\alpha-1}}{\alpha} \sup _{\theta \in{\bf\Theta}} R_{\ell_{\alpha}}(\theta)+\frac{(2\beta)^{\alpha-1}\e^{\alpha}}{\alpha} \mathbb{E}|\mathbf{x}_{1}|^{\alpha}+\frac{1}{n\beta} \log \frac{N({\bf\Theta},\e)}{\delta}\big]
\end{align*}
{holds.} The proof is complete.
\end{proof}

Now, we can give the proof of Theorem \ref{main2}.

{\bf Proof of Theorem \ref{main2}.}
Recall
\begin{align*}
\widehat{R}_{\varphi,\ell_{1}}(\theta)=\frac{1}{n\beta}\sum_{i=1}^{n}\varphi\big(\beta|y_{i}-\mathbf{x}_{i}^{T}\theta|\big),
\end{align*}
since $\hat{\theta}$ is the minimizer of (\ref{mintrun}), we have
\begin{align*}
\widehat{R}_{\varphi,\ell_{1}}(\hat{\theta})-\widehat{R}_{\varphi,\ell_{1}}(\theta^{*})\leq0,
\end{align*}
which implies
\begin{align*}
&R_{\ell_{1}}(\hat{\theta})-R_{\ell_{1}}(\theta^{*})\\
=&\big(R_{\ell_{1}}(\hat{\theta})-\widehat{R}_{\varphi,\ell_{1}}(\hat{\theta})\big)
+\big(\widehat{R}_{\varphi,\ell_{1}}(\hat{\theta})-\widehat{R}_{\varphi,\ell_{1}}(\theta^{*})\big)+\big(\widehat{R}_{\varphi,\ell_{1}}(\theta^{*})-R_{\ell_{1}}(\theta^{*})\big)\\
\leq&\big(R_{\ell_{1}}(\hat{\theta})-\widehat{R}_{\varphi,\ell_{1}}(\hat{\theta})\big)+\big(\widehat{R}_{\varphi,\ell_{1}}(\theta^{*})-R_{\ell_{1}}(\theta^{*})\big).
\end{align*}
By Lemma \ref{second} and Lemma \ref{first}, we immediately obtain the desired result.
\qed

Now we are at the position to give the proof of Corollary \ref{explicit}.

{\bf Proof of Corollary \ref{explicit}.}  For any $\e\in(0,1],$ by \cite[Corollary 4.2.13]{Vershynin} we have
\begin{align*}
N(B_{1},\e)\leq\big(1+\frac{2}{\e}\big)^{d}\leq(\frac{3}{\e})^{d},
\end{align*}
{where $B_1=\{x\in \R^d: |x|\leq 1\}$. } Since ${\bf\Theta}\subseteq B_{r}$, we have
\begin{align}\label{number}
\log N({\bf\Theta},\e)\leq\log N(B_{r},\frac{\e}{2})\leq d\log\frac{6r}{\e}.
\end{align}
Therefore, by (\ref{end}) with $\e=\frac{1}{n},$ we have
\begin{align*}
&R_{\ell_{1}}(\hat{\theta})-R_{\ell_{1}}(\theta^{*})\\
\leq&2\e\mathbb{E}|\mathbf{x}_{1}|+\big(\frac{2^{\alpha-1}\e^{\alpha}}{\alpha} \mathbb{E}|\mathbf{x}_{1}|^{\alpha}+\frac{2^{\alpha-1}+1}{\alpha}\sup_{\theta\in\bf{\Theta}}R_{\ell_{\alpha}}(\theta)+1\big)
\big(\frac{1}{n}(d\log\frac{6r}{\e}+\log\frac{1}{\delta^{2}})\big)^{\frac{\alpha-1}{\alpha}}\\
=&\frac{2}{n}\mathbb{E}|\mathbf{x}_{1}|+\big(\frac{2^{\alpha-1}}{\alpha n^{\alpha}} \mathbb{E}|\mathbf{x}_{1}|^{\alpha}+\frac{2^{\alpha-1}+1}{\alpha}\sup_{\theta\in\bf{\Theta}}R_{\ell_{\alpha}}(\theta)+1\big)
\Big(\frac{1}{n}\big(d\log(6nr)+\log\frac{1}{\delta^{2}}\big)\Big)^{\frac{\alpha-1}{\alpha}}.
\end{align*}
The proof is complete.
\qed

{\bf Acknowledgement.} {LX is supported in part by Macao S.A.R grant FDCT  0090/2019/A2 and University of Macau grant  MYRG2018-00133-FST.}

\bibliographystyle{amsplain}

\end{document}